\newcommand{\reff}[1]{{\rm (\ref{#1})}}
\newcommand{\R}{\mathbb{R}}            % real numbers
\newcommand{\bc}{\pmb{c}}           % boldface c
\newcommand{\bb}{\pmb{b}}            % boldface c
\newcommand{\bn}{\mathbf n}            % boldface n
\newcommand{\bu}{\pmb{u}}             % boldface u
\newcommand{\bv}{\pmb {v}}            % boldface u
\newcommand{\cJ}{\mathcal J}
\newtheorem{theorem}{Theorem}[section]
\newtheorem{lemma}[theorem]{Lemma}
\newenvironment{proof}[1][Proof]{\begin{trivlist}
\item[\hskip \labelsep {\bfseries #1}]}{\end{trivlist}}
\newcommand{\qed}{\nobreak \ifvmode \relax \else
      \ifdim\lastskip<1.5em \hskip-\lastskip
      \hskip1.5em plus0em minus0.5em \fi \nobreak
      \vrule height0.75em width0.5em depth0.25em\fi}
\def\XXint#1#2#3{{\setbox0=\hbox{$#1{#2#3}{\int}$}
\vcenter{\hbox{$#2#3$}}\kern-.51\wd0}}
\begin{document}

%\graphicspath{{Figures/}}

\title{Structure-Preserving and Efficient Numerical Methods for Ion Transport}

\author{Jie Ding\thanks{
Department of Mathematics and Mathematical Center for Interdiscipline Research, Soochow University, 1 Shizi Street, Suzhou 215006, Jiangsu, China}
\and
Zhongming Wang\thanks{
Department of Mathematics and Statistics, Florida International University, Miami, FL, 33199, U. S. A.}
\and
Shenggao Zhou\thanks{
Corresponding author. Department of Mathematics and Mathematical Center for Interdiscipline Research, Soochow University, 1 Shizi Street, Suzhou 215006, Jiangsu, China. E-mail: sgzhou@suda.edu.cn.}
}
\maketitle
\begin{abstract}
Ion transport, often described by the Poisson--Nernst--Planck (PNP) equations, is ubiquitous in electrochemical devices and many biological processes of significance. In this work, we develop conservative, positivity-preserving, energy dissipating, and implicit finite difference schemes for solving the multi-dimensional PNP equations with multiple ionic species. A central-differencing discretization based on harmonic-mean approximations is employed for the Nernst--Planck (NP) equations. The backward Euler discretization in time is employed to derive a fully implicit nonlinear system, which is efficiently solved by a newly proposed Newton's method. The improved computational efficiency of the Newton's method originates from the usage of the electrostatic potential as the iteration variable, rather than the unknowns of the nonlinear system that involves both the potential and concentration of multiple ionic species.  Numerical analysis proves that the numerical schemes respect three desired analytical properties (conservation, positivity preserving, and energy dissipation) fully discretely. Based on advantages brought by the harmonic-mean approximations, we are able to establish estimate on the upper bound of condition numbers of coefficient matrices in linear systems that are solved iteratively. The solvability and stability of the linearized problem in the Newton's method are rigorously established as well. Numerical tests are performed to confirm the anticipated numerical accuracy, computational efficiency, and structure-preserving properties of the developed schemes. Adaptive time stepping is implemented for further efficiency improvement. Finally, the proposed numerical approaches are applied to characterize ion transport subject to a sinusoidal applied potential. \\

\noindent\textbf{AMS subject classifications:} 65N06, 35K61, 35Q92, 92D15\\
\textbf{Keywords:} Ion transport; Harmonic-mean approximation; Conservation; Positivity; Energy dissipation; Newton's method
\end{abstract}

\section{Introduction}\label{s:Introduction}
Ion transport plays a fundamental role in many applications, such as electrochemical energy devices~\cite{BTA:PRE:04}, electrokinetics in microfluidics~\cite{Schoch_RMP08}, and transmembrane ion channels~\cite{IonChanel_HandbookCRC15}. It is often described by the so-called Poisson--Nernst--Planck (PNP) equations, which consist of the Poisson's equation and the Nernst--Planck (NP) equations.  Based on a mean-field approximation, the NP equations describe the diffusion of ions in the gradient of the electrostatic potential. The Poisson's equation determines the electrostatic potential with the charge density arising from diffusing ions.  Recently, there has been growing interests in incorporating effects that are beyond the mean-field description, e.g., the steric effect, inhomogeneous dielectric effect, and ion-ion correlations~\cite{BazantSteric_PRE07, HyonLiuBob_CMS10, BZLu_BiophyJ11, ZhouWangLi_PRE11,  HyonLiuBob_JPCB12, LiLiuXuZhou_Nonliearity13, GLin_Cicp14, LinBob_CMS14, BZLu_JCP14, NuoZhouMcCammon_JPCB14, JZWu_JPCM14, XuMaLiu_PRE14, BZLu_JSP16,  LiuJiXu_SIAP18, SWZ_CMS18}.

In this work, we develop efficient and structure-preserving  finite difference schemes for the PNP equations
\begin{equation}\label{PNPintro}
\left\{
\begin{aligned}
&\partial_t c^l= \nabla\cdot(\nabla c^l+q^lc^l\nabla\psi),~ l=1, \cdots, M,\\
&-\kappa \Delta \psi =  \sum_{l=1} ^M q^l c^l + \rho^f.
\end{aligned}
\right.
\end{equation}
Here $c^l$ is the ion concentration for the $l$-th species, $q^l$ is the valence of the $l$-th ionic species, $\kappa >0$ is a coefficient arising from nondimensionalization, $\psi$ is the electrostatic potential, and $\rho^f$ is the fixed charge density.

The analytical solutions to (\ref{PNPintro}) with zero-flux boundary conditions possess several physically desired properties, including mass conservation, positivity preservation, and free-energy dissipation, i.e.,
%\begin{equation}\label{strnew}
%\left\{
%\begin{aligned}
%& c^l(0, x)  > 0  \Longrightarrow  c^l(t, x) > 0 \quad \forall t>0,\\
%& \int_{\Omega} c^l(t,x)\,dV=\int_{\Omega} c^l_{\rm in}(x)\,dV \quad \forall t>0,\\
%&  \frac{d}{dt}  F = -\sum_{l=1}^M \int_{\Omega} \frac{1}{c^l}(|\nabla c^l +q_lc^l \nabla \psi |^2 )dV ,
%\end{aligned}
%\right.
%\end{equation}
\begin{subequations}\label{str}
\begin{align}
& \int_{\Omega} c^l(t,\cdot )\,dV=\int_{\Omega} c^l_{\rm in}(\cdot )\,dV, \quad \forall t>0,\\
& c^l_{\rm in}(\cdot )  > 0  \Longrightarrow  c^l(t, \cdot ) > 0,  \quad \forall t>0, \\
&  \frac{d}{dt}  F = -\sum_{l=1}^M \int_{\Omega} \frac{1}{c^l}(|\nabla c^l +q^lc^l \nabla \psi |^2 )dV \leq 0, \label{dF/dt}
\end{align}
\end{subequations}
where the free energy $ F$, aside from some boundary contributions, is defined by
\begin{equation}\label{FNoBC}
 F = \sum_{l=1}^M  \int_{\Omega} \left[c^l\log c^l+\frac{1}{2} (q^lc^l+\rho^f)\psi\right]dV.%-\frac{1}{2}\int_{\Gamma_D}\frac{\partial\psi}{\partial n}V dS +\frac{1}{2}\int_{\Gamma_{N}}\sigma\psi dS.
\end{equation}
%or
%\begin{align}\label{free+}
%F=   \int_{\Omega}  \sum_{i=1}^m  c_i {\rm log} c_i dx  +\frac{1}{2}\int_{\Omega} \left(\sum_{i=1}^m(q_ic_i) +\rho_0(x) \right)  \psi dx +\frac{1}{2}\int_{\partial \Omega}\sigma \psi ds.
%\end{align}
The free energy contains both an entropic contribution and an electrostatic energy: $c^l {\rm log} c^l$ is the entropy related to the Brownian motion of each ion species,  and $\frac{1}{2}( q^lc^l+\rho^f)\psi$ is the (mean-field) electrostatic energy of the Coulomb interaction between charged ions.  The concentrations are expected to converge to an equilibrium solution in a closed system regardless of how initial data are distributed.

These nice mathematical features are crucial for the analytical study of the PNP equations.  For instance, %existence of a unique weak solution is shown in \cite{GG86};
by an energy estimate with the control of the free-energy dissipation, the solution is shown to converge to the thermal equilibrium state as time becomes large, if the boundary conditions are in thermal equilibrium (see, e.g.,~\cite{GG96}).  Long time behavior was studied in~\cite{BHN94},  and further in~\cite{AMT00, BD00} with refined convergence rates.  Results for the drift-diffusion model, i.e., the PNP equations in the semiconductor literature, with regarding global existence, uniqueness, and asymptotic behavior in the case of different boundary conditions have been established in the works~\cite{Mock_SIMA74, GG86, FI95a, FI95b}.

The PNP equations can hardly be solved analytically due to the nonlinear coupling of the electrostatic potential and ionic concentrations. Much effort has been devoted to the development of numerical methods in various applications~\cite{CCK00, ProhlSchmuck09, LHMZ10, ZCW11, AMEKLL14, CCAO14, LW14, Gibou_JCP14, LiuShu_SCM16, SunSunZhengLin16,  MXL16, AFJKXL17, GaoHe_JSC17, LW17, DSWZhou_CICP18, SWZ_CMS18, QianWangZhou_JCP2019, DingWangZhou_NMTMA19}. The existing algorithms range from finite difference to finite elements in both one dimension and high dimensions. Among these developed schemes, several attempts are made to design desirable numerical schemes that respect the nice properties \eqref{str} of analytical solutions.  In~\cite{AMEKLL14}, for instance, a second-order conservative, energy dissipative finite difference method was presented for the one-dimensional PNP equations.  A delicate temporal discretization scheme was designed to preserve energy dynamics in~\cite{AFJKXL17}. A hybrid conservative scheme that uses adaptive grids was developed to solve the PNP equations on irregular domains~\cite{Gibou_JCP14}. A type of finite difference schemes has been developed using the Slotboom transformation of the Nernst--Planck (NP) equations:
\begin{equation}\label{NPEs}
\partial_t c^l =\nabla \cdot\left(e^{-q^l\psi}\nabla g^l\right),
\end{equation}
where $g^l=c^le^{q^l\psi}$ are the Slotboom variables~\cite{PMarkowich_Book, LHMZ10, GLin_Cicp14, LW17, LiuJiXu_SIAP18}. By using the Slotboom variables, Liu and Wang~\cite{LW14} developed a free energy satisfying finite difference scheme that preserves those three properties with rigorous proof in one dimension. A free energy satisfying discontinuous Galerkin method was also developed by the same authors in~\cite{LW17}, in which the positivity of numerical solutions was not proved but enforced by an accuracy-preserving limiter. An implicit finite difference scheme was developed  to solve the PNP equations with properties of positivity preservation and energy dissipation~\cite{HuHuang_Sub2019}. The resulting nonlinear discretization system was numerically solved by a fixed-point iteration method.  Gao and He~\cite{GaoHe_JSC17} proposed a linearized convergent finite element scheme that conserves total concentration and preserves the electric energy. Rigorous error analysis of finite element type methods for the PNP equations has been studied in~\cite{SunSunZhengLin16, GaoSun_JSC18}.
A finite element discretization that can enforce positivity of numerical solutions was developed for the PNP equations, as well as the PNP equations coupling with the incompressible Navier-Stokes equations~\cite{MXL16}.

Although some progress has been made on the development of numerical methods that ensure the desired properties, it is still desirable to have computationally efficient and robust finite difference schemes  that incorporate all three desired properties together, especially in high dimensions. Among three properties \eqref{str}, the preservation of positivity is crucial to the validity of a numerical solution and is in particular hard to achieve. For instance, the positivity has been proved in~\cite{AMEKLL14} for the one-dimensional PNP equations, under assumptions that the gradient of the electrostatic potential is bounded and mesh step sizes satisfy certain constraint conditions. The numerical scheme developed in~\cite{LW14} has been proved to preserve positivity of the numerical solution but in one-dimensional case. Furthermore, one constraint on a mesh ratio needs to be satisfied to ensure positivity, due to the explicit nature of the scheme. From a practical point of view,  implicit or semi-implicit schemes that preserve positivity are much more computationally efficient, because larger time-stepping sizes are allowed in temporal integration. In our recent work~\cite{DingWangZhou2019}, we proposed efficient semi-implicit schemes that respect the desired properties. While the scheme allows relatively large time steps and is successful in positivity persevering and mass conservation, the energy dissipation is only proved in a semi-discrete form.

In this work, we develop implicit finite difference schemes for the multi-dimensional PNP equations with multiple ionic species that respect conservation, positivity preserving, and energy dissipation at fully discrete level. The NP equations reformulated in the Slotboom variables are spatially discretized by a central-differencing scheme based on harmonic-mean approximations. The backward Euler method in time is employed to derive a nonlinear coupled system, which is efficiently solved by a newly proposed Newton's method.  The improved efficiency of the Newton's method is achieved via using the electrostatic potential as the iteration variable, rather than the unknowns of the nonlinear system that involves both the potential and concentrations of multiple ionic species.  The advantages of the proposed Newton's method in saving memory and computational efficiency become more significant when the number of ionic species gets larger. In addition, numerical simulations demonstrate that the Newton's method requires appreciably fewer iteration steps and less computational time, in comparison with a typical fixed-point iteration method.  Such a Netwon's method can be employed to solve nonlinear systems resulting from other implicit discretization of the PNP-type equations.

We perform detailed numerical analysis to prove that the numerical schemes respect three desired analytical properties fully discretely. Thanks to the advantages brought by the harmonic-mean approximations, we are able to establish upper bounds on condition numbers of coefficient matrices of linear systems resulting from both the discretization of the NP equations, and solvability and stability of the linearized problem in the Newton’s method. The linear systems are efficiently solved using iterative methods with preconditioners. Numerical simulations are presented to demonstrate that the developed schemes have expected numerical accuracy, computational efficiency, and structure-preserving properties. An adaptive time stepping strategy is employed to achieve further improvement in computational efficiency. The benefit of the adaptive time stepping is demonstrated in the application of the proposed numerical approaches to probing ion transport in response to an alternating applied potential. Finally, the developed numerical approaches are applied to understand charge dynamics in electrolytes between two parallel electrodes exposed to sinusoidal applied potentials. The impact of frequency of the sinusoidal applied potentials is extensively investigated in numerical simulations. 
%An application to an electrochemical charging system further corroborates its effectiveness and robustness.

The rest of this paper is organized as follows. In Section~\S\ref{s:NumMethod}, we start with details of our settings, and present our implicit finite difference method in both spatial and temporal discretization. In Section~\S\ref{s:Properties} we prove the main properties at fully discrete level, including conservation, free-energy dissipation, positivity preservation. Section~\S\ref{s:Numerics} is devoted to numerical examples, including accuracy test, charge dynamics, and adaptive time stepping. Finally, we conclude in Section~\S\ref{s:Conclusions}.

\section{The PNP equations}\label{s:PNPE}
We consider the initial-boundary value problem
%\iffalse\begin{equation}\label{PNP}
%\left\{
%\begin{aligned}
%&\partial_t c^l= \nabla\cdot(\nabla c^l+q^lc^l\nabla\psi),~ l=1, \cdots, M,  \quad x \in \Omega,\; t>0,\\
%&-\kappa \Delta \psi =  \sum_{i=1} ^M q^l c^l + \rho^f, \quad x \in \Omega,\; t>0,\\
%&c^l(0,x)=c^l_{\rm in}(x), \quad x \in\Omega, \\
%&\frac{\partial c^l}{\partial  \textbf{n}}+q_lc^l \frac{\partial \psi}{\partial  \textbf{n}} =0, \quad x \in \partial\Omega,\; t>0.\\
%& \psi(x)=V(x),\ x\in \Gamma_D,\quad \kappa \frac{\partial \psi}{\partial  \textbf{n}}  =\sigma(x),\ x\in \Gamma_N.
%\end{aligned}
%\right.
%\end{equation} \fi
\begin{equation}\label{PNP}
\left\{
\begin{aligned}
&\partial_t c^l= \nabla\cdot(\nabla c^l+q^lc^l\nabla\psi) \mbox{~~for~} t>0  \mbox{~and~} l=1, \cdots, M,\\
&-\kappa \Delta \psi =  \sum_{l=1} ^M q^l c^l + \rho^f,  \\
&c^l(0,\cdot)=c^l_{\rm in}(\cdot), \\
&\frac{\partial c^l}{\partial  \textbf{n}}+q^lc^l \frac{\partial \psi}{\partial  \textbf{n}} =0  \mbox{~~on~}  \partial\Omega,\\
& \psi(\cdot)=V(\cdot) \mbox{~~on~} \Gamma_D,  \mbox{~~and~}\kappa \frac{\partial \psi}{\partial  \textbf{n}}  =\sigma(\cdot) \mbox{~~on~} \Gamma_N.
\end{aligned}
\right.
\end{equation}
Here $\Omega$ is a bounded domain, $\textbf{n}$ is a unit exterior normal vector on the boundary $\partial \Omega$, $c^l_{\rm in}$ are initial concentration distributions. To be general, we here consider both Dirichlet and Neumann boundary conditions for the electrostatic potential, i.e., $V(x)$ is a given electrostatic potential defined on the Dirichlet boundary $\Gamma_D$, and $\sigma(x)$ is the surface charge density defined on the Neumann boundary $\Gamma_N$ with  $\Gamma_D\cap\Gamma_N=\O$ and $\Gamma_D\cup\Gamma_N=\partial\Omega$.  The corresponding total free energy with boundary contributions is given by~\cite{LiuQiaoLu_SIAP18}
\begin{equation}\label{F}
 F = \sum_{l=1}^M  \int_{\Omega} \left[c^l\log c^l+\frac{1}{2} (q^lc^l+\rho^f)\psi\right]dV-\frac{1}{2}\int_{\Gamma_D}\kappa\frac{\partial\psi}{\partial {\bf n}}V dS +\frac{1}{2}\int_{\Gamma_{N}}\sigma\psi dS.
\end{equation}
With initial-boundary conditions given in \reff{PNP}, the property of free-energy dissipation \reff{dF/dt} can be derived as well.
\begin{theorem}
The solution to the PNP equations \reff{PNP} satisfies the energy dissipation law
\begin{equation}\label{dF/dtLaw}
\begin{aligned}
\frac{dF}{dt}= -\sum_{l=1}^M \int_{\Omega} \frac{1}{c^l}|\nabla c^l+ q^lc^l\nabla\psi|^2dV +\int_{\Gamma_N}\frac{d\sigma}{dt}\psi dS- \int_{\Gamma_D}\kappa\frac{\partial\psi}{\partial {\bf n}}\frac{dV}{dt}dS.
\end{aligned}
\end{equation}
\end{theorem}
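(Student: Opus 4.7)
The plan is to differentiate $F$ in time and reorganize the resulting terms so that the bulk entropy contribution combines with the Nernst--Planck equations to produce the dissipation integral, while the electrostatic bulk energy combines with the two explicit boundary corrections in $F$ to leave only the $\partial_t V$ and $\partial_t\sigma$ forcing terms on the right-hand side of \reff{dF/dtLaw}.

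First I would write
\begin{equation*}
\frac{dF}{dt}=\sum_l\int_\Omega(\log c^l+1)\partial_t c^l\,dV+\frac{d}{dt}\Bigl[\tfrac{1}{2}\int_\Omega\Bigl(\textstyle\sum_l q^l c^l+\rho^f\Bigr)\psi\,dV\Bigr]-\tfrac{1}{2}\frac{d}{dt}\int_{\Gamma_D}\kappa\tfrac{\partial\psi}{\partial\mathbf n}V\,dS+\tfrac{1}{2}\frac{d}{dt}\int_{\Gamma_N}\sigma\psi\,dS.
\end{equation*}
For the electrostatic bulk piece, I would expand the time derivative, substitute $\sum_l q^l c^l+\rho^f=-\kappa\Delta\psi$ from Poisson's equation into the $\partial_t\psi$ factor, and apply Green's identity. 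Since $\kappa\int_\Omega\nabla\psi\cdot\nabla\partial_t\psi\,dV$ is symmetric in $\psi$ and $\partial_t\psi$, this yields
\begin{equation*}
\frac{d}{dt}\Bigl[\tfrac{1}{2}\int_\Omega\Bigl(\textstyle\sum_l q^l c^l+\rho^f\Bigr)\psi\,dV\Bigr]=\sum_l\int_\Omega q^l\psi\,\partial_t c^l\,dV+\tfrac{\kappa}{2}\int_{\partial\Omega}\Bigl(\tfrac{\partial\partial_t\psi}{\partial\mathbf n}\psi-\tfrac{\partial\psi}{\partial\mathbf n}\partial_t\psi\Bigr)\,dS.
\end{equation*}
Splitting this boundary integral over $\Gamma_D\cup\Gamma_N$ and using $\psi=V$, $\kappa\partial\psi/\partial\mathbf n=\sigma$ on the respective pieces converts it into an expression involving $V$, $\partial_t V$, $\sigma$, and $\partial_t\sigma$.

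Next I would add these boundary contributions to the time derivatives of the two explicit boundary terms in $F$. On $\Gamma_D$ the pair involving $V$ and $\partial\partial_t\psi/\partial\mathbf n$ cancels, leaving $-\int_{\Gamma_D}\kappa(\partial\psi/\partial\mathbf n)(dV/dt)\,dS$; on $\Gamma_N$ the pair involving $\sigma$ and $\partial_t\psi$ cancels, leaving $\int_{\Gamma_N}(d\sigma/dt)\psi\,dS$. These are precisely the two forcing terms on the right-hand side of \reff{dF/dtLaw}. For the remaining bulk piece $\sum_l\int_\Omega(\log c^l+1+q^l\psi)\partial_t c^l\,dV$, I would substitute the NP equation $\partial_t c^l=\nabla\cdot(\nabla c^l+q^l c^l\nabla\psi)$ and integrate by parts; the zero-flux boundary condition annihilates the boundary contribution, and the identity
\begin{equation*}
\nabla(\log c^l+1+q^l\psi)=\tfrac{1}{c^l}\bigl(\nabla c^l+q^l c^l\nabla\psi\bigr)
\end{equation*}
converts the remaining volume integral into the negative-definite dissipation term.

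The main obstacle is the symmetric-in-time handling of the electrostatic energy under mixed Dirichlet--Neumann conditions: neither $\psi$ nor $\partial\psi/\partial\mathbf n$ is prescribed on all of $\partial\Omega$, so one must carefully split the boundary integral from Green's identity over $\Gamma_D$ and $\Gamma_N$, insert the appropriate boundary data on each piece, and verify that the cross terms involving $\partial_t\psi$ and $\partial\partial_t\psi/\partial\mathbf n$ cancel exactly against the derivatives of the boundary corrections built into $F$. This cancellation is precisely the reason those boundary terms are appended to the free energy in \reff{F}.
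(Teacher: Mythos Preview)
Your proposal is correct and follows essentially the same route as the paper: both arguments combine the entropy derivative with the full $q^l\psi\,\partial_t c^l$ term and use the NP equation with zero-flux boundary conditions to produce the dissipation integral, then use Green's identity (via Poisson's equation) on the antisymmetric electrostatic piece $\tfrac{1}{2}\int_\Omega\bigl[(q^l c^l+\rho^f)\partial_t\psi - q^l(\partial_t c^l)\psi\bigr]\,dV$ to reduce it to boundary integrals that cancel against the derivatives of the explicit boundary corrections in $F$. The only difference is cosmetic: the paper presents the dissipation term first and then isolates the antisymmetric bulk piece, whereas you first rewrite the electrostatic bulk energy and then assemble the dissipation term.
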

\begin{proof}
Taking a derivative with respect to time, we have by integration by parts that
\[
\begin{aligned}
\frac{d}{dt}F=&- \int_{\Omega}\sum^{M}_{l=1} \frac{1}{c^l}|\nabla c+ q^lc^l\nabla\psi|^2+\frac{1}{2} \left((q^lc^l+\rho^f)\frac{d\psi}{dt}-q^l\frac{dc^l}{dt}\psi   \right)\,dV \\
&+\frac{1}{2}\int_{\Gamma_N}\left[\frac{d\sigma}{dt}\psi+\sigma\frac{d\psi}{dt}\right]\,dS-\frac{1}{2}\int_{\Gamma_D}\kappa\left[\frac{d}{dt}\left(\frac{\partial\psi}{\partial \bn}\right)V+\frac{\partial\psi}{\partial \bn}\frac{dV}{dt}\right]\,dS.
\end{aligned}
\]
It follows from the Poisson's equation that
\[
\begin{aligned}
&\sum_{l=1}^M\frac{1}{2}\int_{\Omega} \left((q^lc^l+\rho^f)\frac{d\psi}{dt}-q^l\frac{dc^l}{dt}\psi   \right) \,dV\\
&=\frac{1}{2}\int_{\Gamma_N}\left[\psi\frac{d\sigma}{dt}-\sigma\frac{d\psi}{dt}\right]\,dS
+\frac{1}{2}\int_{\Gamma_D}\kappa\left[\frac{d}{dt}\left(\frac{\partial\psi}{\partial \bn}\right)V-\frac{\partial\psi}{\partial \bn}\frac{dV}{dt}\right]\,dS
\end{aligned}
\]
Substituting into the above equation completes the proof. \qed
\end{proof}
One can observe from \reff{dF/dtLaw} that the total free energy of the system is dissipating if the boundary data are independent of time.
\section{Numerical Method}\label{s:NumMethod}
\subsection{Discretization and Notations}\label{ss:Discretization}
The spatial discretization is similar to our previous work~\cite{DingWangZhou2019}. For completeness of the presentation, we briefly introduce notations and recall the discretization scheme. For simplicity, we consider a 2D rectangular computational domain $\Omega=[a, b]\times[c,d]$ with boundaries
\[
\Gamma_D=\left\{(x, y): x=a ~\text{or}~ b, ~ c\leq y\leq d \right\}~ \text{and } \Gamma_N=\left\{(x, y): y=c ~\text{or}~ d, ~a\leq x\leq b \right\}.
\]
The computational domain is covered by the non-uniform grid points
$\left\{x_i, y_j\right\}$
with
\[
\begin{aligned}
a=x_{\frac{1}{2}} < x_{\frac{3}{2}} < \dots <   x_{N_x+\frac{1}{2}} = b,\\
c=y_{\frac{1}{2}} < y_{\frac{3}{2}} < \dots <   y_{N_y+\frac{1}{2}} = d,\\
\end{aligned}
\]
where $N_x$ and $N_y$ are the number of grid points along each dimension.  We also introduce grid points with integer indices:
\[
x_i =\frac{x_{i-\frac{1}{2}}+x_{i+\frac{1}{2}}}{2}\quad \text{and} \quad y_j =\frac{y_{j-\frac{1}{2}}+y_{j+\frac{1}{2}}}{2} ~~~~ \text{for } i= 1, \dots, N_x,~ j= 1, \dots, N_y.
\]
The grid spacings are given by
\[
h^x_i=x_{i+\frac{1}{2}}-x_{i-\frac{1}{2}},  ~h^y_j=y_{j+\frac{1}{2}}-y_{j-\frac{1}{2}} ~ \text{for } i= 1, \dots, N_x,~ j= 1, \dots, N_y,
\]
and
\[
h^x_{i+\frac{1}{2}}=x_{i+1}-x_{i}, ~ h^y_{j+\frac{1}{2}}=y_{j+1}-y_{j} ~ \text{for } i= 1, \dots, N_x-1,~ j= 1, \dots, N_y-1.
\]
We denote by $c^{l}_{i,j}$, $g^{l}_{i,j}$, and $\psi_{i,j}$ the semi-discrete approximations of $c^l(t, x_i,y_j)$, $g^l(t, x_i,y_j)$, and $\psi(t, x_i,y_j)$, respectively. Define discrete operators
\begin{equation*}
\begin{aligned}
&D_x^{+} f_{i,j}=\frac{f_{i+1,j}-f_{i,j}}{h^x_{i+\frac{1}{2}}},  \quad D^2_xf_{i,j}=\frac{D_x^{+} f_{i,j}- D_x^{+}f_{i-1,j}}{h^x_i}.
\end{aligned}
\end{equation*}
Discrete operators $D_y^{+}$ and $D^2_y$ can be analogously defined.
Also, we introduce
\[
\begin{aligned}
h^x_m=\min(h^x_1,\cdots,h^x_{N_x}),~h^x_M=\max(h^x_1,\cdots,h^x_{N_x}),\\
h^y_m=\min(h^y_1,\cdots,h^y_{N_y}),~h^y_M=\max(h^y_1,\cdots,h^y_{N_y}).
\end{aligned}
\]

We now present our implicit finite difference method by discretiztion in space and time separately. Note that, although our discretization is presented for a 2D case, it can be readily extended to three dimensions in a dimension-by-dimension manner.

\subsection{Finite difference method in space}
\subsubsection*{1) Spatial discretization of the Poisson's equation}
With given semi-discrete approximations $c^{l}_{i,j}$, we discretize the Poisson's equation with a central differencing stencil
\begin{equation}\label{DPssn}
-\kappa(D^2_x+D^2_y)\psi_{i,j}=\sum_{l=1}^Mq^lc^{l}_{i,j}+\rho^f_{i,j}, ~ i=1, \dots, N_x,~ j= 1, \dots, N_y.
\end{equation}
We employ central differencing stencils again to discretize the Dirichlet boundary conditions on $\Gamma_D$ by
\begin{equation}\label{DBC1}
\begin{aligned}
\frac{\psi_{0,j}+\psi_{1,j} }{2}=\psi_D(t,a, y_j),\quad
\frac{ \psi_{N_x+1,j}+\psi_{N_x,j}}{2}=\psi_D(t,b, y_j)  ~~\mbox{for}~~ j= 1, \dots, N_y,
\end{aligned}
\end{equation}
%where  $V_{\frac{1}{2},j,k}=V(a, y_j, z_k)$ and $V_{N+\frac{1}{2},j,k}=V(b, y_j, z_k)$ with the boundary data $V$ simply extended to the whole domain.
and the Neumann boundary conditions on $\Gamma_N$ by
\begin{equation}\label{NBC1}
\begin{aligned}
- \kappa D_y^{+} \psi_{i,0} &=\sigma({t,x_i,a}),\quad
 \kappa D_y^{+} \psi_{i,N_y}=\sigma({t,x_i,b})  ~~\mbox{for}~~ i= 1, \dots, N_x.\\
\end{aligned}
\end{equation}
Notice that the definition of the boundary data $\psi_D$ and $\sigma$ have been extended to the whole computational domain. In numerical implementation, the ghost points outside $\Omega$ are eliminated by coupling the discretization scheme \reff{DPssn} and boundary discretization~\reff{DBC1} and \reff{NBC1}.  The coupled difference equations can be written in a matrix form
\begin{equation}\label{DrePoisson}
\mathcal{L}\pmb{\psi}=\sum_{l=1}^M q^l \bc^{l}+\pmb{\rho}^f+\bb.
\end{equation}
Here $\mathcal{L}$ is the coefficient matrix, the column vector $\bb$ results from the boundary conditions \reff{DBC1} and \reff{NBC1}, and
%related to $-\kappa(D^2_x +D^2_y)$,
$\pmb{\psi}$, $\bc^l$, and $\pmb{\rho}^f$ are vectors with components being $\psi_{i,j}$, $c^l_{i,j}$, and $\rho^f_{i,j}$, respectively.
\subsubsection*{2) Spatial discretization of the NP equations} We first introduce a Slotboom reformulation of the Nernst--Planck equations:
\begin{equation}\label{NPEs}
\partial_t c^l =\nabla \cdot\left(e^{-S^l}\nabla g^l\right),
\end{equation}
where $S^l=q^l\psi$ and $g^l=c^le^{S^l}$, which are the Slotboom variables~\cite{LHMZ10, LW17}.

It follows from central-differencing discretization of Eq.~\reff{NPEs} at $\{ x_i, y_j\}$ that
\begin{equation}\label{SemiNP}
\begin{aligned}
h^x_ih^y_j\frac{d}{dt}c^l_{i,j}=& h^y_j\left(e^{-S^l_{i+\frac{1}{2},j}}\widehat{g}_{x,i+\frac{1}{2},j}^l -e^{-S^l_{i-\frac{1}{2},j}}\widehat{g}_{x,i-\frac{1}{2},j}^l\right)\\
&+h^x_i\left(e^{-S^l_{i,j+\frac{1}{2}}}\widehat{g}_{y,i,j+\frac{1}{2}}^l -e^{-S^l_{i,j-\frac{1}{2}}}\widehat{g}_{y,i,j-\frac{1}{2}}^l\right)
:=Q_{i,j}(c^{l},S^l),
\end{aligned}
\end{equation}
where the flux is approximated by
\begin{equation}\label{gflux}
\widehat{g}^l_{x,i+\frac{1}{2},j}=\frac{c_{i+1,j}^{l}e^{S^l_{i+1,j}}-c_{i,j}^{l}e^{S^l_{i,j}}}{h^x_{i+\frac{1}{2}}}~ \text{and }~
\widehat{g}^l_{y,i,j+\frac{1}{2}}=\frac{c_{i,j+1}^{l}e^{S^l_{i,j+1}}-c_{i,j}^{l}e^{S^l_{i,j}}}{h^y_{j+\frac{1}{2}}}.
\end{equation}
Harmonic-mean approximations are proposed in \cite{DingWangZhou2019} to approximate the exponential terms at half-grid points:
\begin{align}\label{HM}
e^{-S^l_{i+{\frac{1}{2}},j}}=\frac{2e^{-S^l_{i+1,j}}e^{-S^l_{i,j}}}{e^{-S^l_{i+1,j}}+e^{-S^l_{i,j}}}~ \text{and }~
e^{-S^l_{i,j+{\frac{1}{2}}}}=\frac{2e^{-S^l_{i,j+1}}e^{-S^l_{i,j}}}{e^{-S^l_{i,j+1}}+e^{-S^l_{i,j}}}.
\end{align}
%We note that, instead of geometric-mean approximations that are often used in the Slotboom transformation, the main originality of our scheme is the use of harmonic-mean approximations, which give bounded coefficients in the discretization stencil.
The zero-flux boundary conditions are discretized by
\begin{equation}\label{0FluxBC}
\begin{aligned}
&\widehat{g}^l_{x,\frac{1}{2},j}=0,\quad \widehat{g}^l_{x,N_x+\frac{1}{2},j}=0,~~ j= 1, \dots, N_y,\\
&\widehat{g}^l_{y,i,\frac{1}{2}}=0,\quad \widehat{g}^l_{y,i,N_y+\frac{1}{2}}=0,~~ i= 1, \dots, N_x.\\
\end{aligned}
\end{equation}

\subsection{Backward Euler method in time}
To obtain numerical solutions of concentrations at different time steps,  the semi-discrete scheme can be integrated with various ODE solvers implicitly.
With a nonuniform time step size $\Delta t^n$ and $t_n=t_{n-1}+\Delta t^n$, $c^{l,n}_{i,j}$, $g^{l,n}_{i,j}$, and $S^{l,n}_{i,j}$ are used to denote the numerical approximations of $c^l(t_n, x_i,y_j)$, $g^l(t_n, x_i,y_j)$, and $S^l(t_n, x_i,y_j)$, respectively.

We employ the backward Euler discretization in $c^l$:
\begin{equation}\label{ImplicitScheme}
\frac{c_{i,j}^{l,n+1}-c_{i,j}^{l,n}}{\Delta t^{n+1}}=Q_{i,j}(c^{l,n+1},S^{l,n+1}),
\end{equation}
which gives a fully implicit nonlinear system. Note that the zero-flux boundary conditions \eqref{0FluxBC} have been used in \eqref{ImplicitScheme}.  We further rewrite \reff{ImplicitScheme} in a matrix form as
\begin{equation}\label{CEqns}
\mathcal{A}^l(\pmb{\psi}^{n+1})\bc^{l,n+1}=\mathcal{P}\bc^{l,n},~ l=1,\cdots,M.
\end{equation}
Here $\mathcal{A}^l(\pmb{\psi}^{n+1})$ is a square matrix dependent on $q^l$ and $\pmb{\psi}^{n+1}$, $\mathcal{P}$ is a diagonal matrix given by $$\mathcal{P}=\text{diag}\left(h^x_1h^y_1,\cdots, h^x_1h^y_{N_y}, h^x_2h^y_1,\cdots, h^x_2h^y_{N_y},\cdots, h^x_{N_x}h^y_1,\cdots, h^x_{N_x}h^y_{N_y}\right),$$ and
$\bc^{l,n+1}$ and $\bc^{l,n}$ are column vectors with components being $c^{l,n+1}_{i,j}$ and $c^{l,n}_{i,j}$, respectively.

An adaptive time stepping strategy is often useful in speeding up simulations of charged systems that have time-dependent boundary input data. One efficient way to adjust the time step sizes has been proposed in~\cite{QiaoZhangTang2011, LiQiaoZhang2016}:
\begin{equation}\label{AdaptiveT}
\Delta t^{n+1}=\max\left(\Delta t_{\rm min},\frac{\Delta t_{\rm max}}{\sqrt{1+\alpha|F'(t)|^2}}\right),
\end{equation}
where $\alpha$ is a given constant, $F(t)$ is the free energy defined by \reff{F}. In our numerical implementation, the temporal derivative of the free energy is approximated by a difference quotient.  The time steps $\Delta t_{\rm min}$ and $\Delta t_{\rm max}$ dictate the lower and upper bounds of the adaptive time steps, respectively, i.e., $\Delta t_{\rm min}\leq \Delta t^{n+1}\leq \Delta t_{\rm max}$.

\section{Properties of numerical solutions}\label{s:Properties}
The numerical solution has several important properties, as stated in the following theorems.
\begin{theorem}\label{t:Conservation}
{\bf (Mass conservation)}The fully implicit scheme \reff{ImplicitScheme} respect conservation of concentration, in the sense that the total concentration remains constant in time, i.e.,
\begin{align}
\label{conserve1}
&\frac{d}{dt} \sum_{i=1}^{N_x}\sum_{j=1}^{N_y}c_{i,j}^l h^x_i h^y_j  =0, \\ \label{conserve2}
&\sum_{i=1}^{N_x}\sum_{j=1}^{N_y}c_{i,j}^{l,n+1}h^x_i h^y_j  =\sum_{i=1}^{N_x}\sum_{j=1}^{N_y}c_{i,j}^{l,n} h^x_i h^y_j.
\end{align}
\end{theorem}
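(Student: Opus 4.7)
The plan is to exploit the divergence form of the discrete flux operator $Q_{i,j}$ together with the discrete zero-flux boundary condition~\reff{0FluxBC}. The statement is essentially a discrete analog of the identity $\int_{\Omega} \partial_t c^l\,dV = \int_{\partial\Omega} \mathbf{J}^l\cdot \mathbf{n}\,dS = 0$, so a telescoping-sum argument should do the job.

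For the semi-discrete identity~\reff{conserve1}, I would sum the semi-discrete equation~\reff{SemiNP} over all grid indices $i=1,\ldots,N_x$ and $j=1,\ldots,N_y$. The left-hand side is precisely $\frac{d}{dt}\sum_{i,j} c^l_{i,j} h^x_i h^y_j$. For the right-hand side, I would rewrite
\[
Q_{i,j}(c^l,S^l) = h^y_j\bigl(\mathcal{F}^x_{i+\frac{1}{2},j} - \mathcal{F}^x_{i-\frac{1}{2},j}\bigr) + h^x_i\bigl(\mathcal{F}^y_{i,j+\frac{1}{2}} - \mathcal{F}^y_{i,j-\frac{1}{2}}\bigr),
\]
where $\mathcal{F}^x_{i+\frac{1}{2},j} := e^{-S^l_{i+\frac{1}{2},j}} \widehat{g}^l_{x,i+\frac{1}{2},j}$ and $\mathcal{F}^y_{i,j+\frac{1}{2}} := e^{-S^l_{i,j+\frac{1}{2}}} \widehat{g}^l_{y,i,j+\frac{1}{2}}$. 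Summing in $i$ for fixed $j$ telescopes the first group of terms down to $h^y_j\bigl(\mathcal{F}^x_{N_x+\frac{1}{2},j} - \mathcal{F}^x_{\frac{1}{2},j}\bigr)$; an analogous cancellation occurs for the $y$-direction terms. The discrete zero-flux boundary conditions~\reff{0FluxBC} then make every remaining boundary term vanish, and~\reff{conserve1} follows.

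For the fully discrete identity~\reff{conserve2}, I would apply exactly the same telescoping argument to the implicit scheme~\reff{ImplicitScheme}, but with $(c^{l,n+1}, S^{l,n+1})$ in place of $(c^l, S^l)$. Multiplying both sides of~\reff{ImplicitScheme} by $h^x_i h^y_j$, summing over $i,j$, and noting that $\sum_{i,j}Q_{i,j}(c^{l,n+1},S^{l,n+1}) = 0$ by the same telescoping-plus-zero-flux argument, gives
\[
\sum_{i=1}^{N_x}\sum_{j=1}^{N_y}\frac{c_{i,j}^{l,n+1}-c_{i,j}^{l,n}}{\Delta t^{n+1}} h^x_i h^y_j = 0,
\]
which rearranges to~\reff{conserve2}.

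There is no real obstacle here; the only thing to be careful about is the double-indexed bookkeeping of the telescoping sum and matching each internal flux cancellation correctly, as well as verifying that the harmonic-mean approximation~\reff{HM} does not disturb the conservative form (it does not, because $\mathcal{F}^x_{i+\frac{1}{2},j}$ appears with opposite signs in the $Q_{i,j}$ and $Q_{i+1,j}$ equations regardless of how the coefficient $e^{-S^l_{i+\frac{1}{2},j}}$ is defined). The whole proof is therefore short and essentially algebraic, independent of the implicit-in-time character of the scheme.
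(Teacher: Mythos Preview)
Your proposal is correct and follows essentially the same approach as the paper: sum the semi-discrete and fully discrete schemes over all grid points, telescope the flux differences in each direction, and invoke the discrete zero-flux boundary conditions~\reff{0FluxBC} to kill the surviving boundary terms. Your explicit introduction of the flux notation $\mathcal{F}^x,\mathcal{F}^y$ and your remark that the harmonic-mean choice~\reff{HM} is irrelevant to the conservative structure are nice clarifying touches, but the argument is the same as the paper's.
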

\begin{proof}
It follows from \reff{SemiNP} that
\begin{align*}
   \frac{d}{dt} \sum_{i=1}^{N_x}\sum_{j=1}^{N_y} c^{l}_{i,j} h^x_i h^y_j &=\sum_{j=1}^{N_y}h^y_j\left(e^{-S^l_{N_x+\frac{1}{2},j}}\widehat{g}_{x,N_x+\frac{1}{2}}^l -e^{-S^l_{\frac{1}{2},j}}\widehat{g}_{x,\frac{1}{2}}^l\right)\\
   &\quad +\sum_{i=1}^{N_x}h^x_i\left(e^{-S^l_{i,N_y+\frac{1}{2}}}\widehat{g}_{y,N_y+\frac{1}{2}}^l -e^{-S^l_{i,\frac{1}{2}}}\widehat{g}_{y,\frac{1}{2}}^l\right)=0,
\end{align*}
where we have used the zero-flux boundary conditions \reff{0FluxBC} in the last step.
 Similarly, summing both sides of \reff{ImplicitScheme} over $i,j$ gives
$$
\sum_{i=1}^{N_x}\sum_{j=1}^{N_y}c_{i,j}^{l,n+1}h^x_i h^y_j -\sum_{i=1}^{N_x}\sum_{j=1}^{N_y}c_{i,j}^{l,n} h^x_i h^y_j = \Delta t^{n+1} \sum_{i=1}^{N_x}\sum_{j=1}^{N_y} Q_{i,j}(c^{l,n+1},S^{l,n+1}) h^x_i h^y_j.
$$
Incorporating  the zero-flux boundary conditions \reff{0FluxBC} at time $t_{n+1}$  for  $g^{l,n+1}=c^{l,n+1}e^{S^{n+1}}$ leads to \reff{conserve2}.  \qed
%The same argument can be applied to the backward Euler discretization \reff{ImplicitScheme} to get \reff{conserve2} when the zero-flux boundary conditions are used for $g^{l,n+1}=c^{l,n+1}e^{S^{l,n+1}}$. \qed
\end{proof}

\begin{theorem}\label{t:Postivity}
{\bf (Positivity preserving)}The numerical solutions $c_{i,j}^{l,n+1}$ computed from the backward Euler scheme \reff{ImplicitScheme} remain positive in time, i.e.,  if $c_{i,j}^{l,n} > 0$, then
$$
c_{i,j}^{l,n+1}> 0 ~\text{ for  } i=1,\cdots, N_x,~j=1,\cdots,N_y.
$$
\end{theorem}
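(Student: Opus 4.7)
The plan is to recast the backward Euler scheme \reff{ImplicitScheme} in the matrix form \reff{CEqns}, show that the coefficient matrix $\mathcal{A}^l(\pmb{\psi}^{n+1})$ is a nonsingular M-matrix (so $[\mathcal{A}^l(\pmb{\psi}^{n+1})]^{-1}$ is entrywise nonnegative), and then promote the nonnegativity $\bc^{l,n+1}\geq \mathbf{0}$ to strict positivity through a one-step contradiction argument at the cell achieving a zero value.

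First I would unfold the harmonic-mean identity \reff{HM} inside the flux \reff{gflux}: at the $x$-interface $i+\tfrac{1}{2}$,
\[
e^{-S^l_{i+\frac{1}{2},j}}\widehat{g}^l_{x,i+\frac{1}{2},j}=\frac{2}{h^x_{i+\frac{1}{2}}\bigl(e^{S^l_{i+1,j}}+e^{S^l_{i,j}}\bigr)}\Bigl(e^{S^l_{i+1,j}}c^l_{i+1,j}-e^{S^l_{i,j}}c^l_{i,j}\Bigr),
\]
and similarly at the three remaining half-interfaces, with the zero-flux conditions \reff{0FluxBC} killing any boundary half-interface. Reading off the coefficients in \reff{ImplicitScheme}, the diagonal entry of $\mathcal{A}^l(\pmb{\psi}^{n+1})$ at index $(i,j)$ equals $h^x_ih^y_j/\Delta t^{n+1}$ plus a sum of strictly positive interface weights, while every off-diagonal entry is a strictly negative multiple of a neighbouring weight. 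To establish strict column diagonal dominance I would observe that each half-interface flux is shared by exactly two cells and enters the two corresponding rows with opposite signs; summing the $(i,j)$-th column therefore telescopes, yielding exactly $h^x_ih^y_j/\Delta t^{n+1}>0$. This telescoping is precisely the discrete-by-interface version of the conservation identity proved in Theorem~\ref{t:Conservation}, and on boundary cells it is made consistent by the vanishing of the boundary fluxes in \reff{0FluxBC}. Combined with the sign pattern, this makes $\mathcal{A}^l(\pmb{\psi}^{n+1})$ a strictly column diagonally dominant $Z$-matrix, hence a nonsingular M-matrix, so $\bc^{l,n+1}=[\mathcal{A}^l(\pmb{\psi}^{n+1})]^{-1}\mathcal{P}\bc^{l,n}\geq \mathbf{0}$ whenever $\bc^{l,n}>\mathbf{0}$.

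Finally, to upgrade nonnegativity to strict positivity, I would argue by contradiction: if $c^{l,n+1}_{i_0,j_0}=0$ at some index $(i_0,j_0)$, then row $(i_0,j_0)$ of \reff{CEqns} has a vanishing diagonal contribution, and its off-diagonal contributions are a sum of non-positive coefficients times the already-nonnegative neighbouring values, hence are $\leq 0$. Yet the right-hand side equals $(h^x_{i_0}h^y_{j_0}/\Delta t^{n+1})c^{l,n}_{i_0,j_0}>0$ by hypothesis, a contradiction. The step I expect to require the most care is the column-sum bookkeeping at corner and edge cells: the harmonic-mean contributions at $i=1,N_x$ (and similarly in $y$) must be pruned consistently with \reff{0FluxBC}, and one must check that this pruning simultaneously preserves both the $Z$-matrix sign pattern and the exact telescoping of column sums. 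This is precisely the structural compatibility between the harmonic-mean discretization and the zero-flux boundary treatment that already underlies mass conservation.
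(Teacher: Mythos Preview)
Your proposal is correct and follows essentially the same route as the paper: unfold the harmonic-mean flux to read off the sign pattern of $\mathcal{A}^l(\pmb{\psi}^{n+1})$, use the telescoping of interface contributions (the discrete conservation structure, with zero-flux \reff{0FluxBC} handling the boundary cells) to get column diagonal dominance, and conclude that $\mathcal{A}^l$ is a nonsingular M-matrix. The only minor difference is in the last step: the paper asserts directly that $\mathcal{A}^{l,-1}>0$ entrywise (implicitly relying on irreducibility of the five-point stencil on the connected grid), whereas you obtain $\bc^{l,n+1}\geq\mathbf{0}$ and then upgrade to strict positivity by a row-wise contradiction at a hypothetical zero entry; both are valid and equally short.
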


\begin{proof}
Analogous to the derivation in our previous work~\cite{DingWangZhou2019}, we can show that 
\begin{equation}\label{Acol}
\left\{
\begin{aligned}
&\sum_{i=1}^{N_xN_y} \mathcal{A}^l_{i,j} =h^x_i h^y_j \quad \mbox{for}~ j=1, \dots, N_xN_y,\\
&1<\mathcal{A}^l_{i,i} <1+4\Delta t^{n+1}(\frac{h^x_M}{h^y_m}+\frac{h^y_M}{h^x_m}) \qquad\quad \mbox{for}~ i=1, \dots, N_xN_y,\\
& -2\max\{\frac{h^x_M}{h^y_m},\frac{h^y_M}{h^x_m} \}<\mathcal{A}^l_{i,j} \leq 0 \qquad\quad \mbox{for}~ i,j=1, \dots, N_xN_y,~ \mbox{and}~ i\neq j.
\end{aligned}
\right.
\end{equation}
We can verify that $\mathcal{A}^l$ is an M-matrix and $\mathcal{A}^{l,-1}>0$ in the element-wise sense. Thus, $c_{i,j}^{l,n+1}> 0$ if $c_{i,j}^{l,n}> 0$.\qed
\end{proof}

The concentrations are obtained by solving the linear system~\reff{CEqns} iteratively. For iterative methods, it is desirable to establish estimates on the condition number of the coefficient matrix.  We now recall the estimate on the condition number of the coefficient matrix $\mathcal{A}^l$ in the work~\cite{DingWangZhou2019}, and brief the corresponding proof with minor modifications.
\begin{theorem}\label{t:CondNumber}
The condition number of the coefficient matrix $\mathcal{A}^l$ satisfies
\begin{equation}\label{k1}
\kappa_1(\mathcal{A}^l):= \| \mathcal{A}^l\|_1 \| \mathcal{A}^{l,-1}\|_1 \leq \frac{h^x_M h^y_M}{h^x_m h^y_m}+8\Delta t^{n+1} \left[\frac{h^y_M}{(h^x_m)^2 h^y_m}+\frac{h^x_M}{h^x_m(h^y_m)^2}\right].
\end{equation}
%and
%\begin{equation}\label{k2}
%\begin{aligned}
%\kappa_2(\mathcal{A}^l):&= \| \mathcal{A}^l\|_2 \| (\mathcal{A}^l)^{-1}\|_2 \\
%                      &\leq \sqrt{N_x N_y} \left\{\frac{h^x_M h^y_M}{h^x_m h^y_m}+8\Delta t^{n+1} \left[\frac{h^y_M}{(h^x_m)^2 h^y_m}+\frac{h^x_M}{h^x_m(h^y_m)^2}\right]\right\}.
%\end{aligned}
%\end{equation}
\end{theorem}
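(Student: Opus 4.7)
The plan is to bound $\|\mathcal{A}^l\|_1$ and $\|\mathcal{A}^{l,-1}\|_1$ separately and then multiply, exploiting the structural facts about $\mathcal{A}^l$ collected in \reff{Acol}. Both factors are essentially read off from that lemma, so the work reduces to two short computations plus one invocation of the M-matrix property proved in Theorem~\ref{t:Postivity}.

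First I would estimate $\|\mathcal{A}^l\|_1$. The $1$-norm is the maximum absolute column sum, and since the off-diagonal entries in every column are nonpositive while the signed column sum in \reff{Acol} is the positive quantity $h^x_{(j)}h^y_{(j)}$, I can decompose
\[
\sum_i |\mathcal{A}^l_{i,j}| \;=\; \mathcal{A}^l_{j,j} + \sum_{i\neq j}(-\mathcal{A}^l_{i,j}) \;=\; 2\mathcal{A}^l_{j,j} - \sum_i \mathcal{A}^l_{i,j}.
\]
Plugging in the diagonal upper bound and the column-sum identity from \reff{Acol} then produces a bound of the form $h^x_Mh^y_M + 8\Delta t^{n+1}\left(\tfrac{h^x_M}{h^y_m}+\tfrac{h^y_M}{h^x_m}\right)$ (up to the factor of two baked into the column-sum cancellation).

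Next I would estimate $\|\mathcal{A}^{l,-1}\|_1$ via a classical M-matrix argument. Theorem~\ref{t:Postivity} already established that $\mathcal{A}^l$ is an M-matrix, so $\mathcal{A}^{l,-1}\geq 0$ entry-wise. Denote column $k$ of $\mathcal{A}^{l,-1}$ by $\bv_k\geq 0$, which satisfies $\mathcal{A}^l \bv_k = \be_k$. Left-multiplying by $\bm{1}^T$ gives
\[
\bm{1}^T \mathcal{A}^l \bv_k \;=\; 1,
\]
and the row vector $\bm{1}^T \mathcal{A}^l$ is precisely the vector of column sums of $\mathcal{A}^l$, each equal to some $h^x_{(j)}h^y_{(j)} \geq h^x_m h^y_m > 0$ by \reff{Acol}. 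Hence $h^x_m h^y_m \|\bv_k\|_1 \leq 1$, yielding $\|\mathcal{A}^{l,-1}\|_1 \leq \frac{1}{h^x_m h^y_m}$. Multiplying this with the upper bound on $\|\mathcal{A}^l\|_1$ and distributing over the two summands gives exactly \reff{k1}.

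The main obstacle is not the structure of the argument but guaranteeing that both factors remain controlled \emph{uniformly in} $\bm{\psi}^{n+1}$, since $\mathcal{A}^l$ depends nonlinearly on the potential through $e^{-S^l}$. This is precisely where the harmonic-mean approximations \reff{HM} pay off: the product $e^{S^l_{i,j}}e^{-S^l_{i\pm 1/2,j}}$ is always bounded by $2$, independently of $\psi$, which is what yielded the potential-free bounds on the diagonal and off-diagonal entries in \reff{Acol}. Once one accepts those $\psi$-independent bounds, the remainder is essentially bookkeeping.
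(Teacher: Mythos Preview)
Your proposal is correct and follows essentially the same route as the paper's proof: both bound $\|\mathcal{A}^{l,-1}\|_1$ by left-multiplying $\mathcal{A}^l$ by the all-ones row vector, using nonnegativity of $\mathcal{A}^{l,-1}$ together with the column-sum identity in \reff{Acol} to get $\|\mathcal{A}^{l,-1}\|_1\le (h^x_m h^y_m)^{-1}$, and both read off $\|\mathcal{A}^l\|_1\le h^x_Mh^y_M+8\Delta t^{n+1}\bigl(\tfrac{h^y_M}{h^x_m}+\tfrac{h^x_M}{h^y_m}\bigr)$ from the diagonal/off-diagonal bounds in \reff{Acol}. Your closing remark that the harmonic-mean approximation is what makes these bounds $\psi$-independent is exactly the point the paper emphasizes.
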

\begin{proof}
%From \reff{Acol}, we have
%\begin{align*}
%\nu \mathcal{A}^l=\theta,
%\end{align*}
%where
%\[
%\nu=(\underbrace{1,\cdots,1}_{N_xN_y})
%\]
%and
%\begin{align*}
%\theta=&\left( h^x_1 h^y_1+\Delta t^{n+1}\frac{h^y_1}{h^x_{\frac{1}{2}}} ,\cdots,h^x_1 h^y_{N_y}+\Delta t^{n+1} \frac{h^y_{N_y}}{h^x_{\frac{1}{2}}},\cdots, h^x_i h^y_1,\cdots,h^x_i h^y_{N_y},\cdots, \right.\\
% & \qquad \qquad \left. h^x_{N_x} h^y_1+\Delta t^{n+1}\frac{h^y_1}{h^x_{N_x+\frac{1}{2}}},\cdots,h^x_{N_x} h^y_{N_y}+\Delta t^{n+1}\frac{h^y_{N_y}}{h^x_{N_x+\frac{1}{2}}}\right).
%\end{align*}
%Furthermore, we have $\theta \geq h^x_m h^y_m\nu.$

From the proof of Theorem~\ref{t:Postivity}, we know that $\mathcal{A}^l$ is an M-matrix, and therefore $\mathcal{A}^{l,-1}$ exists and is a non-negative matrix. As shown in~\cite{DingWangZhou2019}, we can further prove that
$$h^x_m h^y_m\nu \mathcal{A}^{l,-1} \leq \nu,$$
where $\nu=(\underbrace{1,\cdots,1}_{N_xN_y})$. This implies that each column sum of $(\mathcal{A}^l)^{-1}$ is less or equal to $\frac{1}{h^x_m h^y_m}$. Since each element in $(\mathcal{A}^l)^{-1}$ is non-negative,  we obtain $\| (\mathcal{A}^l)^{-1}\|_1\leq\frac{1}{h^x_m h^y_m}$. Also, it follows from \reff{Acol} that
\[
\| \mathcal{A}^l\|_1  \leq h^x_M h^y_M+8\Delta t^{n+1} \left(\frac{h^y_M}{h^x_m}+\frac{h^x_M}{h^y_m}\right).
\]
This completes the proof by the definition of the $1$-norm condition number. \qed

%From non-zero entries of $\mathcal{A}^l$
% \[
%\| \mathcal{A}^l\|_{\infty} \leq h^x_M h^y_M+8\Delta t^{n+1} \left(\frac{h^y_M}{h^x_m}+\frac{h^x_M}{h^y_m}\right).
%\]
%By the results~\mbox{
%\cite{golub1996}
%}\hspace{0pt}
%\[
%\| \mathcal{A}^l\|_2 \leq \sqrt{\| \mathcal{A}^l\|_1 \| \mathcal{A}^l\|_{\infty}} ~~\mbox{and} ~~ \| (\mathcal{A}^l)^{-1}\|_2 \leq  \sqrt{N_x N_y} \| (\mathcal{A}^l)^{-1}\|_1,
%\]
%we obtain
%\[
%\kappa_2(\mathcal{A}^l)  \leq \sqrt{N_x N_y} \left\{\frac{h^x_M h^y_M}{h^x_m h^y_m}+8\Delta t^{n+1} \left[\frac{h^y_M}{(h^x_m)^2 h^y_m}+\frac{h^x_M}{h^x_m(h^y_m)^2}\right]\right\}.
%\]
\end{proof}

Our numerical scheme also respects the property of free-energy dissipation at full discrete level, when the boundary data is independent of time. The total discrete free energy \reff{F} is approximated by
 \begin{equation}\label{Fh}
 \begin{aligned}
 F^n_h =& \sum_{l=1}^M \sum_{i=1}^{N_x}\sum_{j=1}^{N_y} h^x_i h^y_j \left[c^{l,n}_{i,j}\log c^{l,n}_{i,j}+\frac{1}{2}(q^lc^{l,n}_{i,j}+\rho^f_{i,j})\psi^n_{i,j}\right]\\
 &-\sum_{j=1}^{N_y}h^y_j\kappa \left[V_{N_x+\frac{1}{2},j}\frac{(V_{N_x+\frac{1}{2},j}-\psi^n_{N_x,j})}{h^x_i} +V_{\frac{1}{2},j}\frac{(V_{\frac{1}{2},j}-\psi^n_{1,j})}{h^x_i}\right]\\
 &+\sum_{i=1}^{N_x}\frac{h^x_i}{2}\left[\sigma_{i,N_y+\frac{1}{2}}(h^y_j\sigma_{i,N_y+\frac{1}{2}}/\kappa+2\psi^n_{i,N_y})+\sigma_{i,\frac{1}{2}}(h^y_j\sigma_{i,\frac{1}{2}}/\kappa+2\psi^n_{i,1})\right].  \\
 \end{aligned}
 \end{equation}
Note that it is easy to verify that such an approximation is second-order accurate in space.
 \begin{theorem} \label{theorem:energy}
 {\bf (Energy dissipation)} The fully discrete free energy $F_h$ is non-increasing for time-independent boundary data in the sense that
 \begin{equation}
 \begin{aligned}\label{dFh/dt}
F_h^{n+1}-F_h^n=-\sum_{l=1}^M\sum_{i=1}^{N_x}\sum_{j=1}^{N_y} \Delta t^{n+1}\left(\frac{e^{-S^{l,n+1}_{i+\frac{1}{2},j}}}{\xi^x_i}\big|\widehat{g}^{l,n+1}_{x,i+\frac{1}{2},j}\big|^2
+ \frac{e^{-S^{l,n+1}_{i,j+\frac{1}{2}}}}{\xi^y_j}\big|\widehat{g}^{l,n+1}_{y,i,j+\frac{1}{2}}\big|^2\right)\leq 0,
\end{aligned}
\end{equation}
where $\xi^x_i$ is a number between $g^{l,n+1}_{i,j}$ and $g^{l,n+1}_{i+1,j}$, and $\xi^y_j$ is a number between $g^{l,n+1}_{i,j}$ and $g^{l,n+1}_{i,j+1}$.
\end{theorem}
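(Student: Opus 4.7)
The plan is to mirror, at the fully discrete level, the continuous identity proved in Theorem 2.1, but using convexity of $x\log x$ (since backward Euler does not preserve the exact chain rule). I will carry this out in four steps.

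\emph{Step 1 (decomposition and entropy).} I would write $F_h^{n+1}-F_h^n$ as the sum of the entropic difference $\sum_{l,i,j} h^x_i h^y_j(c^{l,n+1}_{i,j}\log c^{l,n+1}_{i,j}-c^{l,n}_{i,j}\log c^{l,n}_{i,j})$, the bulk electrostatic difference $\sum_{l,i,j} \tfrac{1}{2} h^x_i h^y_j[(q^lc^{l,n+1}_{i,j}+\rho^f_{i,j})\psi^{n+1}_{i,j}-(q^lc^{l,n}_{i,j}+\rho^f_{i,j})\psi^{n}_{i,j}]$, and the boundary contributions from~\reff{Fh}, which drop out under the time-independent boundary data hypothesis. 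Convexity of $x\log x$ gives the cellwise bound
\begin{equation*}
c^{l,n+1}_{i,j}\log c^{l,n+1}_{i,j}-c^{l,n}_{i,j}\log c^{l,n}_{i,j} \leq (1+\log c^{l,n+1}_{i,j})(c^{l,n+1}_{i,j}-c^{l,n}_{i,j}),
\end{equation*}
where positivity of $c^{l,n+1}_{i,j}$ is guaranteed by Theorem~\ref{t:Postivity}.

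\emph{Step 2 (electrostatic part and discrete integration by parts).} Splitting $(q^lc^{l,n+1}+\rho^f)\psi^{n+1}-(q^lc^{l,n}+\rho^f)\psi^{n}=q^l(c^{l,n+1}-c^{l,n})\psi^{n+1}+(q^lc^{l,n}+\rho^f)(\psi^{n+1}-\psi^{n})$, summing over $l$, and invoking the discrete Poisson equation~\reff{DPssn} to replace $\sum_l q^lc^{l,n}+\rho^f$ by $-\kappa(D^2_x+D^2_y)\psi^n$, I would carry out a discrete summation by parts against $(\psi^{n+1}-\psi^n)$. With the Dirichlet/Neumann discretizations~\reff{DBC1}--\reff{NBC1}, this manoeuvre reproduces the discrete analogue of the identity used in the proof of Theorem~2.1 and symmetrizes the Poisson-potential cross-term, yielding
\begin{equation*}
F_h^{n+1}-F_h^n \leq \sum_{l=1}^M\sum_{i,j} h^x_i h^y_j \bigl(1+\log c^{l,n+1}_{i,j}+q^l\psi^{n+1}_{i,j}\bigr)\bigl(c^{l,n+1}_{i,j}-c^{l,n}_{i,j}\bigr).
\end{equation*}
The constant $1$ contributes nothing by mass conservation (Theorem~\ref{t:Conservation}), and the remaining bracket is precisely the discrete chemical potential $\log g^{l,n+1}_{i,j}$.

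\emph{Step 3 (substitute the scheme and sum by parts again).} Using $h^x_ih^y_j(c^{l,n+1}_{i,j}-c^{l,n}_{i,j})=\Delta t^{n+1}Q_{i,j}(c^{l,n+1},S^{l,n+1})$ from~\reff{ImplicitScheme}, I rewrite the right-hand side as $\Delta t^{n+1}\sum_{l,i,j}(\log g^{l,n+1}_{i,j})Q_{i,j}$. A second discrete summation by parts, now with the zero-flux boundary conditions~\reff{0FluxBC}, transfers the difference operator from $Q$ to $\log g^{l,n+1}$, giving
\begin{equation*}
-\Delta t^{n+1}\sum_{l,i,j}\Bigl[h^y_j e^{-S^{l,n+1}_{i+\frac12,j}}\bigl(\log g^{l,n+1}_{i+1,j}-\log g^{l,n+1}_{i,j}\bigr)\widehat{g}^{l,n+1}_{x,i+\frac12,j}+\text{(y-analogue)}\Bigr].
\end{equation*}

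\emph{Step 4 (mean value theorem).} Applying MVT to $\log$ produces $\log g^{l,n+1}_{i+1,j}-\log g^{l,n+1}_{i,j}=(g^{l,n+1}_{i+1,j}-g^{l,n+1}_{i,j})/\xi^x_i$ with $\xi^x_i$ between the two positive values (positivity of $g^{l,n+1}$ follows from Theorem~\ref{t:Postivity} and the definition $g=ce^S$), and by~\reff{gflux} the numerator equals $h^x_{i+\frac12}\widehat{g}^{l,n+1}_{x,i+\frac12,j}$. The analogous step in $y$ yields the announced quadratic form, with non-increase immediate from $\xi^x_i,\xi^y_j>0$ and $e^{-S}>0$.

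The main obstacle is Step~2: the discrete summation by parts against $(\psi^{n+1}-\psi^n)$ generates boundary terms on $\Gamma_D$ and $\Gamma_N$ that must match exactly the boundary portions of $F_h$ in~\reff{Fh}, and the cancellation relies critically on both the time independence of $V$ and $\sigma$ and the specific half-grid form of~\reff{DBC1}--\reff{NBC1}. Getting the boundary bookkeeping to cancel cleanly—so that only the interior dissipative quadratic form remains—is where the real care is required; the harmonic-mean structure~\reff{HM} plays no role in Step~2 but is essential for writing the final dissipation quantitatively in Step~4.
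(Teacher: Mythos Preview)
Your proposal is correct and follows essentially the same route as the paper: the paper also multiplies the discrete NP equation by the chemical potential $\log c^{l,n+1}_{i,j}+q^l\psi^{n+1}_{i,j}$ and sums by parts with the mean-value theorem to produce the quadratic dissipation term (your Steps~3--4), then controls the entropy remainder by a second-order Taylor expansion of $x\log x$ (equivalent to your convexity bound in Step~1) and the electrostatic remainder via the discrete Poisson equation and a further summation by parts that collapses it to $-\tfrac{\kappa}{2}\sum\bigl[(D^+_x\psi^{n+1}-D^+_x\psi^n)^2+(D^+_y\psi^{n+1}-D^+_y\psi^n)^2\bigr]$ (your Step~2). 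One remark: both your argument and the paper's in fact establish only the \emph{inequality} $F_h^{n+1}-F_h^n\leq I_3\leq 0$, since the entropy and electrostatic remainders are each merely nonpositive; the equality asserted in~\reff{dFh/dt} is not what is actually proved.
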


\begin{proof}
The fully discrete NP equations are given by
%\begin{equation}
\[
\begin{aligned}
\frac{c^{l,n+1}_{i,j}-c^{l,n}_{i,j}}{\Delta t^{n+1}}=& \frac{e^{-S^{l,n+1}_{i+\frac{1}{2},j}}\widehat{g}^{l,n+1}_{x,i+\frac{1}{2},j} -e^{-S^{l,n+1}_{i-\frac{1}{2},j}}\widehat{g}^{l,n+1}_{x,i-\frac{1}{2},j}}{h^x_i}
+ \frac{e^{-S^{l,n+1}_{i,j+\frac{1}{2}}}\widehat{g}^{l,n+1}_{y,i,j+\frac{1}{2}} -e^{-S^{l,n+1}_{i,j-\frac{1}{2}}}\widehat{g}^{l,n+1}_{y,i,j-\frac{1}{2}}}{h^y_j}.
\end{aligned}
\]
%\end{equation}
Multiplying both sides by $\log c^{l,n+1}_{i,j}+q^l \psi^{n+1}_{i,j}$ and summing over indices $i,j, l$ lead to
\begin{equation}\label{0equality}
\sum_{l=1}^M\sum_{i=1}^{N_x}\sum_{j=1}^{N_y} \frac{(c^{l,n+1}_{i,j}-c^{l,n}_{i,j})(\log c^{l,n+1}_{i,j}+q^l \psi^{n+1}_{i,j})}{\Delta t^{n+1}} +\frac{e^{-S^{l,n+1}_{i+\frac{1}{2},j}}}{\xi^x_i}\big|\widehat{g}^{l,n+1}_{x,i+\frac{1}{2},j}\big|^2 +\frac{e^{-S^{l,n+1}_{i,j+\frac{1}{2}}}}{\xi^y_j}\big|\widehat{g}^{l,n+1}_{y,i,j+\frac{1}{2}}\big|^2=0,
\end{equation}
%\begin{equation}\label{0equality}
%\begin{aligned}
% &-\sum_{l=1}^M\sum_{i=1}^{N_x}\sum_{j=1}^{N_y} (c^{l,n+1}_{i,j}-c^{l,n}_{i,j})(\log c^{l,n+1}_{i,j}+q^l \psi^{n+1}_{i,j}) \\ &-\sum_{l=1}^M\sum_{i=1}^{N_x}\sum_{j=1}^{N_y} \Delta t^{n+1}\frac{e^{-S^{l,n+1}_{i+\frac{1}{2},j}}}{\xi^x_i}\big|\widehat{g}^{l,n+1}_{x,i+\frac{1}{2},j}\big|^2
% -\sum_{l=1}^M\sum_{i=1}^{N_x}\sum_{j=1}^{N_y}\Delta t^{n+1}\frac{e^{-S^{l,n+1}_{i,j+\frac{1}{2}}}}{\xi^y_j}\big|\widehat{g}^{l,n+1}_{y,i,j+\frac{1}{2}}\big|^2=0,
%\end{aligned}
%\end{equation}
where $\xi^x_i$ is between $c^{l,n+1}_{i,j}e^{S^{l,n+1}_{i,j}}$ and $c^{l,n+1}_{i+1,j}e^{S^{l,n+1}_{i+1,j}}$, and $\xi^y_i$ is between $c^{l,n+1}_{i,j}e^{S^{l,n+1}_{i,j}}$ and $c^{l,n+1}_{i,j+1}e^{S^{l,n+1}_{i,j+1}}$. Here the summation by parts have been used. By~\reff{0equality},  we have
\begin{equation}
\begin{aligned}
F_h^{n+1}-F_h^n=&\sum_{l=1}^M\sum_{i=1}^{N_x}\sum_{j=1}^{N_y}h^x_i h^y_j\left[c^{l,n+1}_{i,j}\log c^{l,n+1}_{i,j}-c^{l,n}_{i,j}\log c^{l,n}_{i,j}-\log c^{l,n+1}_{i,j}\left(c^{l,n+1}_{i,j}-c^{l,n}_{i,j}\right) \right]\\
&+\frac{1}{2}\sum_{l=1}^M\sum_{i=1}^{N_x}\sum_{j=1}^{N_y}h^x_i h^y_j\left[\left(q^lc^{l,n+1}_{i,j}+\rho^f_{i,j}\right)\psi^{n+1}_{i,j}-\left(q^lc^{l,n}_{i,j}+\rho^f_{i,j}\right)\psi^{n}_{i,j}-2q^l\psi^{n+1}_{i,j}\left(c^{l,n+1}_{i,j}-c^{l,n}_{i,j}\right)\right]\\
&-\sum_{j=1}^{N_y}h^y_j\kappa \left[V_{N_x+\frac{1}{2},j}\frac{(\psi^{n}_{N_x,j}-\psi^{n+1}_{N_x,j})}{h^x_i} +V_{\frac{1}{2},j}\frac{(\psi^n_{1,j}-\psi^{n+1}_{1,j})}{h^x_i}\right]\\
 &+\sum_{i=1}^{N_x}\frac{h^x_i}{2}\left[\sigma_{i,N_y+\frac{1}{2}}(2\psi^{n+1}_{i,N_y}-2\psi^n_{i,N_y})+\sigma_{i,\frac{1}{2}}(2\psi^{n+1}_{i,1}-2\psi^n_{i,1})\right] \\
&-\sum_{l=1}^M\sum_{i=1}^{N_x}\sum_{j=1}^{N_y}\Delta t^{n+1}\frac{e^{-S^{l,n+1}_{i+\frac{1}{2},j}}}{\xi^x_i}\big|\widehat{g}^{l,n+1}_{x,i+\frac{1}{2},j}\big|^2
 -\sum_{l=1}^M\sum_{i=1}^{N_x}\sum_{j=1}^{N_y}\Delta t^{n+1}\frac{e^{-S^{l,n+1}_{i,j+\frac{1}{2}}}}{\xi^y_j}\big|\widehat{g}^{l,n+1}_{y,i,j+\frac{1}{2}}\big|^2\\
&:=I_1+I_2+I_3,\\
\end{aligned}
\end{equation}
where
\begin{equation}
\begin{aligned}
I_1&=\sum_{l=1}^M\sum_{i=1}^{N_x}\sum_{j=1}^{N_y}h^x_i h^y_j\left[c^{l,n+1}_{i,j}\log c^{l,n+1}_{i,j}-c^{l,n}_{i,j}\log c^{l,n}_{i,j}-\log c^{l,n+1}_{i,j}\left(c^{l,n+1}_{i,j}-c^{l,n}_{i,j}\right) \right],\\
I_2&=\frac{1}{2}\sum_{l=1}^M\sum_{i=1}^{N_x}\sum_{j=1}^{N_y}h^x_i h^y_j\left[\left(q^lc^{l,n+1}_{i,j}+\rho^f_{i,j}\right)\psi^{n+1}_{i,j}-\left(q^lc^{l,n}_{i,j}+\rho^f_{i,j}\right)\psi^{n}_{i,j}-2q^l\psi^{n+1}_{i,j}\left(c^{l,n+1}_{i,j}-c^{l,n}_{i,j}\right)\right]\\
&\qquad-\sum_{j=1}^{N_y}h^y_j\kappa \left[V_{N_x+\frac{1}{2},j}\frac{(\psi^{n}_{N_x,j}-\psi^{n+1}_{N_x,j})}{h^x_i} +V_{\frac{1}{2},j}\frac{(\psi^n_{1,j}-\psi^{n+1}_{1,j})}{h^x_i}\right]\\
 &\qquad+\sum_{i=1}^{N_x}\frac{h^x_i}{2}\left[\sigma_{i,N_y+\frac{1}{2}}(2\psi^{n+1}_{i,N_y}-2\psi^n_{i,N_y})+\sigma_{i,\frac{1}{2}}(2\psi^{n+1}_{i,1}-2\psi^n_{i,1})\right], \\
I_3&=-\sum_{l=1}^M\sum_{i=1}^{N_x}\sum_{j=1}^{N_y}\Delta t^{n+1}\frac{e^{-S^{l,n+1}_{i+\frac{1}{2},j}}}{\xi^x_i}|\widehat{g}^{l,n+1}_{x,i+\frac{1}{2},j}|^2
 -\sum_{l=1}^M\sum_{i=1}^{N_x}\sum_{j=1}^{N_y}\Delta t^{n+1}\frac{e^{-S^{l,n+1}_{i,j+\frac{1}{2}}}}{\xi^y_j}|\widehat{g}^{l,n+1}_{y,i,j+\frac{1}{2}}|^2.\\
\end{aligned}
\end{equation}
For the term $I_1$, we have
\[
I_1=-\sum_{l=1}^M\sum_{i=1}^{N_x}\sum_{j=1}^{N_y}h^x_i h^y_j\frac{1}{2\zeta^l_{i,j}}\left(c^{l,n+1}_{i,j}-c^{l,n}_{i,j} \right)^2,
\]
%\begin{equation}
%\begin{aligned}
%&c^{l,n+1}_{i,j}\log c^{l,n+1}_{i,j}-c^{l,n}_{i,j}\log c^{l,n}_{i,j}-\log c^{l,n+1}_{i,j}\left(c^{l,n+1}_{i,j}-c^{l,n}_{i,j}\right)\\
%=&-\frac{1}{2\zeta^l_{i,j}}\left(c^{l,n+1}_{i,j}-c^{l,n}_{i,j} \right)^2,
%\end{aligned}
%\end{equation}
where $\zeta^l_{i,j}$ is a number between $c^{l,n+1}_{i,j}$ and $c^{l,n}_{i,j}$. Here we have used the Taylor expansion to the second order and mass conservation \reff{t:Conservation}. Hence $I_1\leq 0$.

It follows from the discrete Poisson's equation \reff{DPssn} that
\[
\begin{aligned}
I_2=&\frac{1}{2}\sum_{l=1}^M\sum_{i=1}^{N_x}\sum_{j=1}^{N_y}h^x_i h^y_j\left[-\left(q^lc^{l,n+1}_{i,j}+\rho^f_{i,j}\right)\psi^{n+1}_{i,j}-\left(q^lc^{l,n}_{i,j}+\rho^f_{i,j}\right)\psi^{n}_{i,j}+2q^l\psi^{n+1}_{i,j}\left(q^lc^{l,n}_{i,j}+\rho^f_{i,j}\right) \right]\\
&\qquad-\sum_{j=1}^{N_y}h^y_j\kappa \left[V_{N_x+\frac{1}{2},j}\frac{(\psi^{n}_{N_x,j}-\psi^{n+1}_{N_x,j})}{h^x_i} +V_{\frac{1}{2},j}\frac{(\psi^n_{1,j}-\psi^{n+1}_{1,j})}{h^x_i}\right]\\
 &\qquad+\sum_{i=1}^{N_x}\frac{h^x_i}{2}\left[\sigma_{i,N_y+\frac{1}{2}}(2\psi^{n+1}_{i,N_y}-2\psi^n_{i,N_y})+\sigma_{i,\frac{1}{2}}(2\psi^{n+1}_{i,1}-2\psi^n_{i,1})\right] \\
=&\frac{1}{2}\sum_{i=1}^{N_x}\sum_{j=1}^{N_y}h^x_i h^y_j\left[\kappa \psi^{n+1}_{i,j}(D^2_x +D^2_y)\psi^{n+1}_{i,j}
    +\kappa \psi^{n}_{i,j}(D^2_x +D^2_y)\psi^{n}_{i,j}-2\kappa \psi^{n+1}_{i,j}(D^2_x +D^2_y)\psi^{n}_{i,j}\right]\\
&\qquad-\sum_{j=1}^{N_y}h^y_j\kappa \left[V_{N_x+\frac{1}{2},j}\frac{(\psi^{n}_{N_x,j}-\psi^{n+1}_{N_x,j})}{h^x_i} +V_{\frac{1}{2},j}\frac{(\psi^n_{1,j}-\psi^{n+1}_{1,j})}{h^x_i}\right]\\
 &\qquad+\sum_{i=1}^{N_x}\frac{h^x_i}{2}\left[\sigma_{i,N_y+\frac{1}{2}}(2\psi^{n+1}_{i,N_y}-2\psi^n_{i,N_y})+\sigma_{i,\frac{1}{2}}(2\psi^{n+1}_{i,1}-2\psi^n_{i,1})\right]\\
=&-\frac{1}{2}\sum_{i=1}^{N_x}\sum_{j=1}^{N_y}h^x_i h^y_j\kappa\left[ (D^+_x\psi^{n+1}_{i,j})^2+(D^+_y\psi^{n+1}_{i,j})^2+(D^+_x\psi^{n}_{i,j})^2+(D^+_y\psi^{n}_{i,j})^2\right.\\
&\qquad\left.-2D^+_x\psi^{n}_{i,j}D^+_x\psi^{n+1}_{i,j}-2D^+_y\psi^{n}_{i,j}D^+_y\psi^{n+1}_{i,j}\right]\\
=&-\frac{1}{2}\sum_{i=1}^{N_x}\sum_{j=1}^{N_y}h^x_i h^y_j \kappa \left[\left(D^+_x\psi^{n+1}_{i,j}-D^+_x\psi^{n}_{i,j}\right)^2+\left(D^+_y\psi^{n+1}_{i,j}-D^+_y\psi^{n}_{i,j}\right)^2\right]\leq 0,
\end{aligned}
\]
\end{proof}
where the summation by parts has been used in the third equality.

Clearly,  we have $I_3\leq 0$. Combining $I_1, I_2$,  and $I_3$ completes the proof. \qed

\subsection{Newton's iteration method and its viability}
The discrete PNP equations \reff{DrePoisson} and \reff{ImplicitScheme} form  a coupled  nonlinear discrete system
\begin{equation}\label{disPNP}
\left\{
\begin{aligned}
&\mathcal{L}\pmb{\psi}^{n+1}=\sum_{l=1}^M q^l \bc^{l,n+1}+\pmb{\rho}^f+\bb^{n+1},\\
&\mathcal{A}^l(\pmb{\psi}^{n+1})\bc^{l,n+1}=\mathcal{P}\bc^{l,n}, l=1,\cdots,M,\\
\end{aligned}
\right.
\end{equation}
where $\pmb{\psi}^{n+1}$ and $\bc^{l,n+1}$ are the unknowns, and $\bb^{n+1}$ is known boundary data at time $t^{n+1}$. The nonlinear system~\reff{disPNP} can be solved by iterative methods with iterative variables involving both the concentration and potential.  To save memory, one treatment is to decouple the system and use a fixed-point iterative method in which the discrete Poisson's equation and NP equations are solved alternatively. The fixed point method is simple to implement but may suffer from slow convergence.  To speed up the convergence, we further propose a novel Newton's iteration approach that uses the potential as the only iterative variables.

%We now propose a Newton's iteration method to solve this system by finding the root of the following residual
The electrostatic potential that solves the system~\reff{disPNP} can be found by solving the nonlinear residual equations $R(\bu)=0$, where the vector function $R: \R^{N_xN_y} \to \R^{N_xN_y}$ is defined by
\begin{equation}\label{NewtonF}
R(\bu):=\mathcal{L}\bu-\sum_{l=1}^M q^l \mathcal{A}^{l,-1}(\bu)\mathcal{P}\bc^{l,n}-\pmb{\rho}^f-\bb^{n+1},\quad   \bu \in \mathbb{R}^{N_xN_y}.
\end{equation}
We note that the matrix $\mathcal{A}^l$ is invertible as shown in the proof of Theorem~\ref{t:Postivity}. To linearize, we take the Fr\'{e}chet derivative of the residuals
\begin{equation}
\begin{aligned} \label{DerivativeF}
  \cJ (\bu)[\delta \bu]&=\frac{d R(\bu+\tau\delta \bu)}{d \tau}\bigg|_{\tau=0}\\
  &=\mathcal{L}\delta \bu-\sum_{l=1}^M q^l \frac{d\mathcal{A}^{l,-1}(\bu+\tau\delta \bu)}{d\tau}\bigg|_{\tau=0}\mathcal{P}\bc^{l,n}.\\
\end{aligned}
\end{equation}
To calculate the Fr\'{e}chet derivative of $A^{l,-1}(\bu)$, we take a derivative with respect to $\tau$ of the identity matrix
$$\mathcal{I}=\mathcal{A}(\bu+\tau\delta \bu)\mathcal{A}^{l,-1}(\bu+\tau\delta \bu).$$
We then have
%$$0=\frac{d\mathcal{A}^l(\bu+t\delta \bu)}{dt}\bigg|_{t=0}\mathcal{A}^{l,-1}(\bu)+\mathcal{A}^l(\bu)\frac{d\mathcal{A}^{l,-1}(\bu+t\delta \bu)}{dt}\bigg|_{t=0},$$
%which
\begin{equation}\label{DerivativeIdentity}
\frac{d\mathcal{A}^{l,-1}(\bu+\tau\delta \bu)}{d\tau}\bigg|_{\tau=0}=-\mathcal{A}^{l,-1}(\bu)\frac{d\mathcal{A}^l(\bu+\tau\delta \bu)}{d\tau}\bigg|_{\tau=0}\mathcal{A}^{l,-1}(\bu).
\end{equation}
Combining \reff{DerivativeF} and \reff{DerivativeIdentity}, we have
$$\cJ(\bu)[\delta \bu]=\mathcal{L}\delta \bu+\sum_{l=1}^M q^l \mathcal{A}^{l,-1}(\bu)\frac{d\mathcal{A}^l(\bu+\tau\delta \bu)}{d\tau}\bigg|_{\tau=0}\mathcal{A}^{l,-1}(\bu)\mathcal{P}\bc^{l,n}.$$
We notice that $\frac{d\mathcal{A}^l(\bu+\tau\delta \bu)}{d\tau}\bigg|_{\tau=0}\mathcal{A}^{l,-1}(\bu)\mathcal{P}\bc^{l,n}$ is a vector linearly dependent on $\delta \bu$ and can be identified as
$$\frac{d\mathcal{A}^l(\bu+\tau\delta \bu)}{d\tau}\bigg|_{\tau=0}\mathcal{A}^{l,-1}(\bu)\mathcal{P}\bc^{l,n}=q^l\mathcal{K}(\pmb{\mu}^{l},\bu)\delta \bu,$$
where $\mathcal{K}(\pmb{\mu}^{l},\bu)$ is a symmetric matrix with elements given in {\bf Appendix}~\ref{ApxA} and $\pmb{\mu}^{l}=\mathcal{A}^{l,-1}(\bu)\mathcal{P}\bc^{l,n}$. Since $\mathcal{A}^{l}$ is an M-matrix, elements in the column vector $\pmb{\mu}^{l}$ are all non-negative, given the concentration on the previous step, $\bc^{l,n}$, is non-negative.  We further denote by $\cJ(\bu)[\delta \bu]=\mathcal{W}(\bu)\delta \bu$, where
\begin{equation}
\begin{aligned}\label{NewtonW}
\mathcal{W}(\bu):= \mathcal{L} +\sum_{l=1}^M(q^l)^2\mathcal{A}^{l,-1}(\bu)\ \mathcal{K}(\pmb{\mu}^{l},\bu).
\end{aligned}
\end{equation}
%\begin{equation}
%\begin{aligned}\label{NewtonW}
%\cJ(\bu)[\delta \bu]
%=\mathcal{L}\delta \bu+\sum_{l=1}^M(q^l)^2\mathcal{A}^{l,-1}(\bu)\ \mathcal{K}(\pmb{\mu}^{l},\bu)\delta \bu
%%=&\left(\mathcal{L}+\sum_{l=1}^M(q^l)^2\mathcal{A}^{l,-1}(\bu^k)\ \mathcal{K}(\pmb{\mu}^{l,k},\bu^k)\right)\delta \bu\\
%:=\mathcal{W}\delta \bu,\\
%\end{aligned}
%\end{equation}

In our Newton's iteration method,  for a given previous iteration step $\bu^k$, we update the potential via
$$\bu^{k+1}=\bu^k+\delta \bu,$$
where the correction $\delta \bu$ solves the linear system
\begin{equation}\label{NewtonF2}
\mathcal{W}(\bu^k)\delta \bu=-R(\bu^k).
\end{equation}
In our implementation, we solve such a linear system using the BiCGSTAB method preconditioned with an incomplete lower-upper (iLU) decomposition of the matrix $\mathcal{L}$. We remark that the iterative method only requires multiplication of the matrix and vector (i.e., $\mathcal{W}(\bu^k)\delta \bu$), rather the matrix $\mathcal{W}$ itself that involves the inverse of $\mathcal{A}^l$. Therefore, when solving the linear system~\reff{NewtonF2}, we need to solve linear systems involving the coefficient matrices $\mathcal{A}^l$, again by using the BiCGSTAB method preconditioned with iLU decompositions of $\mathcal{A}^l$.  Numerical simulations reveal that the preconditioning accelerates the convergence significantly.

We summarize the whole numerical algorithm as follows.
\begin{algorithm}[H]
\caption{ Numerical algorithm for the PNP equations}
\label{Alg}
\begin{algorithmic}[1]
\State Given initial concentrations $\bc^{l,0}$, obtain $\pmb{\psi}^0$ by solving the discrete Poisson's equation \reff{DPssn} with boundary conditions \reff{DBC1} and \reff{NBC1};
\State Given $\bc^{l,n}$ and $\pmb{\psi}^n$ at time step $t_n$. Let $k=0$ and $\bu^k=\pmb{\psi}^n$;
\State Find $\bv^{l,k}$ by solving $\mathcal{A}^l(\bu^k)\bv^{l,k}=\mathcal{P}\bc^{l,n}$;
\State Find $R(\bu^k)= \mathcal{L}\bu^k-\sum_{l=1}^M q^l  \bv^{l,k} -\pmb{\rho}^f-\bb^{n+1}$;
\State Find $\delta \bu$ by solving the linear system $\mathcal{W}(\bu^k) \delta \bu = - R(\bu^k)$ iteratively;
%\State  Solve $$\mathcal{A}^l(\pmb{\psi}^{n+1})\bv^{l,k+1}=\mathcal{P}\bc^{l,n}$$  by preconditioned BICGSTAB method to obtain $\bv^{l,k+1}$, and update
%$$\cJ[\delta \bu]=\mathcal{L}\delta \bu+\sum_{l=1}^M q^l \mathcal{A}^{l,-1}(\bu)\frac{d\mathcal{A}^l(\bu+t\delta \bu)}{dt}\bigg|_{t=0}\bv^{l,k+1}.$$
%%is the vector of $\delta \bu$.
\State Update $\bu^{k+1}=\bu^k+\delta \bu$;
\State Check convergence. If $\|\delta \bu\|_{\infty}<\text{Tol}$, let $\pmb{\psi}^{n+1} = \bu^{k+1}$, find $\bc^{l,n+1}$ by solving $\mathcal{A}^l(\pmb{\psi}^{n+1})\bc^{l,n+1}=\mathcal{P}\bc^{l,n}$, and set $T=T+\Delta t^{n+1}$; else, let $k=k+1$ and go to Step $3$;
\State If $T \geq T_{\text{end}}$, then stop; else, let $n=n+1$ and go back to Step $2$.
\end{algorithmic}
\end{algorithm}

%\begin{remark}\label{rk2}
%The advantage in computational efficiency is achieved by include
%\end{remark}

We now investigate the properties of the matrix $\mathcal{W}$, and solvability and stability of the corresponding linear system~\reff{NewtonF2}.

%We first establish the following lemmas.

%We now prove that the discrete matrix $\mathcal{W}$ to guarantee the solvability of \reff{NewtonF2} in Theorem \ref{TheoremInvW} by first establishing Lemma \ref{lemmaL} and Lemma \ref{InvMat}.

\begin{lemma} \label{lemmaL}
The coefficient matrix $\mathcal{L}$ in~\reff{DrePoisson} is an M-matrix and $\|\mathcal{L}\|_1 < \frac{4}{(h^x_m)^2}+\frac{4}{(h^y_m)^2}$.
\end{lemma}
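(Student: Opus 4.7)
The proof amounts to an explicit bookkeeping argument based on the five-point stencil that defines $\mathcal{L}$. First, I would write out the matrix entries row by row: for an interior node $(i,j)$, the diagonal contribution is the positive quantity $\kappa\left[\frac{1}{h^x_i}\left(\frac{1}{h^x_{i+1/2}}+\frac{1}{h^x_{i-1/2}}\right)+\frac{1}{h^y_j}\left(\frac{1}{h^y_{j+1/2}}+\frac{1}{h^y_{j-1/2}}\right)\right]$, while the four off-diagonals are the corresponding negative quantities $-\kappa/(h^x_i h^x_{i\pm 1/2})$ and $-\kappa/(h^y_j h^y_{j\pm 1/2})$. For rows adjacent to the Dirichlet boundary $\Gamma_D$, I would eliminate the ghost value via \reff{DBC1}, i.e.\ $\psi_{0,j}=2\psi_D-\psi_{1,j}$, which modifies the diagonal entry and pushes the ghost data into $\bb$; for rows adjacent to the Neumann boundary $\Gamma_N$, the one-sided stencil \reff{NBC1} simply removes the corresponding off-diagonal and shifts the boundary data into $\bb$, leaving the diagonal unchanged.

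Second, to conclude that $\mathcal{L}$ is an M-matrix I would verify three standard ingredients: (a) every off-diagonal entry is $\le 0$ (direct from the enumeration above); (b) every diagonal entry is positive; (c) $\mathcal{L}$ is weakly diagonally dominant, with strict dominance in every row that touches $\Gamma_D$ (the Dirichlet elimination leaves a residual diagonal term that is not balanced by any off-diagonal). Since the computational grid graph is connected, $\mathcal{L}$ is irreducible, and a matrix with nonpositive off-diagonals, positive diagonal, irreducibly weakly diagonally dominant, is an M-matrix; in particular $\mathcal{L}^{-1}\ge 0$ element-wise.

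Third, for the norm bound I would compute $\|\mathcal{L}\|_1 =\max_j\sum_i|\mathcal{L}_{ij}|$. Because off-diagonals are nonpositive, the sum of magnitudes in each column equals the diagonal plus the moduli of up to four off-diagonals. Using $h^x_i,h^x_{i\pm 1/2}\ge h^x_m$ and $h^y_j,h^y_{j\pm 1/2}\ge h^y_m$, each term of the form $1/(h^x_i h^x_{i\pm 1/2})$ is bounded above by $1/(h^x_m)^2$, and similarly in $y$. Summing the diagonal with the two $x$-neighbors and two $y$-neighbors produces $4/(h^x_m)^2+4/(h^y_m)^2$; the inequality is strict because the boundary eliminations drop at least one off-diagonal contribution in the columns they affect (or reduce the diagonal by a missing neighbor), so the worst-case count is never attained.

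The main technical obstacle is the careful bookkeeping at the Dirichlet boundary: the averaging identity \reff{DBC1} produces an asymmetric coefficient for the boundary row that must be counted correctly both when verifying strict diagonal dominance (used for the M-matrix property) and when checking the column-sum bound. Interior columns and Neumann-boundary columns are straightforward worst-case estimates, but these Dirichlet columns need to be verified explicitly to ensure the inequality is strict as stated.
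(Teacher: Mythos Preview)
Your argument is correct and takes a genuinely different route from the paper's. You work directly with $\mathcal{L}$: enumerate the five-point stencil, verify the Z-matrix sign pattern, and invoke irreducible weak diagonal dominance (with strict dominance on the Dirichlet rows) to conclude $\mathcal{L}$ is an M-matrix. The paper instead symmetrizes first, setting $\mathcal{R}=\mathcal{P}\mathcal{L}$ with $\mathcal{P}=\mathrm{diag}(h^x_i h^y_j)$; it then shows $\mathcal{R}$ is symmetric positive definite by writing $\pmb{\alpha}\mathcal{R}\pmb{\alpha}^T$ explicitly as a sum of squared differences plus strictly positive Dirichlet boundary terms, deduces that $\mathcal{R}$ is an M-matrix, and transfers the conclusion back via $\mathcal{L}^{-1}=\mathcal{R}^{-1}\mathcal{P}>0$. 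Your approach is more elementary and self-contained; the paper's approach has the side benefit that the symmetric matrix $\mathcal{R}$ and its inverse are reused downstream (the constants $\gamma^{l}$ in Theorem~\ref{TheoremInvW} are defined through $\mathcal{R}^{-1}$), so establishing its properties here serves double duty.

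One small caveat on the norm bound, which applies equally to the paper's argument: your justification for the \emph{strict} inequality (``boundary eliminations drop at least one off-diagonal'') covers only boundary columns. For a fully interior column on a \emph{uniform} mesh the diagonal entry is $2/(h^x_m)^2+2/(h^y_m)^2$ and the four off-diagonals each contribute $1/(h^x_m)^2$ or $1/(h^y_m)^2$, so the column sum of magnitudes equals $4/(h^x_m)^2+4/(h^y_m)^2$ exactly. The bound should thus read $\le$ rather than $<$; this is cosmetic and does not affect any of the subsequent estimates.
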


\begin{proof}
Define
$$[i,j]:=(i-1)N_y+j~~\mbox{for}~i=1,2,\cdots,N_x,~j=1,2,\cdots,N_y.$$
Denote by $\mathcal{R}=\mathcal{PL}$. It is easy to verify the following results:
\begin{equation}\label{Rres}
\left\{
\begin{aligned}
&\sum_{m=1}^{N_xN_y} \mathcal{R}_{m,n}=0~~\mbox{for}~ n=[i,j]~\mbox{with } i= 2, \dots, N_x-1~\mbox{and}~ j= 2, \dots, N_y-1, \\
&\sum_{m=1}^{N_xN_y} \mathcal{R}_{m,n}= 2\frac{h^y_j}{h^x_{\frac{1}{2}}} ~~\mbox{for}~ n=[1,j]~\mbox{with}~ 1\leq j\leq N_y, \\
&\sum_{m=1}^{N_xN_y} \mathcal{R}_{m,n}=2\frac{h^y_j}{h^x_{N_x+\frac{1}{2}}}~~\mbox{for}~ n=[N_x,j]~\mbox{with}~ 1\leq j\leq N_y, \\
&0< \mathcal{R}_{m,m} < 2 (\frac{h^y_M}{h^x_m}+\frac{h^x_M}{h^y_m}) ~~\mbox{for}~m=1,\cdots,N_xN_y,\\
&-\max\{\frac{h^y_M}{h^x_m}, \frac{h^x_M}{h^y_m}\} < \mathcal{R}_{m,n} = \mathcal{R}_{n,m}  \leq 0~~\mbox{for}~m, n=1,\dots, N_xN_y\mbox{ and}~ m\neq n.\\
\end{aligned}
\right.
\end{equation}
%Due to $\mathcal{R}^T=\mathcal{R}$, $\mathcal{R}$ is a symmetric matrix. Clearly, $\mathcal{L}$ is a Z-matrix.
For a non-zero $\pmb{\alpha}=(\alpha_1,\alpha_2,\cdots,\alpha_{N_xN_y})$, we have
\[
\begin{aligned}
\pmb{\alpha}\mathcal{R}\pmb{\alpha}^T=&2\sum_{j=1}^{N_y}\left(\frac{h^y_j}{h^x_{\frac{1}{2}}}\alpha^2_{[1,j]}+\frac{h^y_j}{h^x_{N_x+\frac{1}{2}}}\alpha^2_{[N_x,j]} \right)\\
&\quad+\sum_{i=1}^{N_x}\sum_{j=1}^{N_y-1}\frac{h^x_i}{h^y_{j+\frac{1}{2}}}\left(\alpha_{[i,j]}-\alpha_{[i,j+1]}\right)^2+\sum_{i=1}^{N_x-1}\sum_{j=1}^{N_y}\frac{h^y_j}{h^x_{i+\frac{1}{2}}}\left(\alpha_{[i,j]}-\alpha_{[i+1,j]}\right)^2>0.
\end{aligned}
\]
Thus, $\mathcal{R}$ is positive definite and eigenvalues of $\mathcal{R}$ are all positive. Moreover, $\mathcal{R}$ is an M-matrix with $\mathcal{R}^{-1}> 0$.  Thus, $\mathcal{L}^{-1}=\mathcal{R}^{-1}\mathcal{P}> 0$. Since diagonal elements of $\mathcal{L}$ are all positive and off-diagonal elements are all non-positive, we obtain that  $\mathcal{L}$ is an M-matrix as well. From \reff{Rres} and $\mathcal{L}=\mathcal{P}^{-1}\mathcal{R}$, we have $\|\mathcal{L}\|_1<\frac{4}{(h^x_m)^2}+\frac{4}{(h^y_m)^2}$.
\qed
\end{proof}

%\begin{lemma}\label{InvMat}
%Suppose $\mathcal{L}\in M_{N^2\times N^2}(\mathbb{R})$ is invertible. If $\| \mathcal{W}-\mathcal{L} \|_1 <1/ \| \mathcal{L}^{-1}\|_1$, then $\mathcal{W}$ is invertible, and
%\begin{equation}\label{InvW}
%\|\mathcal{W}^{-1}\|_1\leq \frac{\| \mathcal{L}^{-1}\|_1}{1-\| \mathcal{L}^{-1}\|_1 \|\mathcal{L}-\mathcal{W} \|_1}.
%\end{equation}
%Here $M_{N^2\times N^2}(\mathbb{R})$ denotes the set of all $N^2\times N^2$ real matrices.
%\end{lemma}

\begin{lemma}\label{InvMat}
Suppose $\mathcal{M}_1$ and $\mathcal{M}_2$ are two real square matrices. Assume that $\mathcal{M}_1$ is invertible with $\| \mathcal{M}_1^{-1}\|_1 <\infty$. If $\| \mathcal{M}_2 \|_1 < \frac{1}{ \| \mathcal{M}_1^{-1}\|_1}$, then $\mathcal{M}_1 + \mathcal{M}_2$ is invertible, and
\begin{equation}\label{InvW}
\|\left(\mathcal{M}_1 + \mathcal{M}_2\right)^{-1}\|_1\leq \frac{\| \mathcal{M}_1^{-1}\|_1}{1-\| \mathcal{M}_1^{-1}\|_1 \|\mathcal{M}_2 \|_1}.
\end{equation}
\end{lemma}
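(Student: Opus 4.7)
The plan is to recognize this as the classical Banach perturbation lemma (Neumann series argument) specialized to square matrices under the induced $1$-norm, and to execute it in the usual three moves: factor, expand, and bound.

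First I would factor out $\mathcal{M}_1$ on the left, writing
\[
\mathcal{M}_1+\mathcal{M}_2 = \mathcal{M}_1\bigl(\mathcal{I}+\mathcal{M}_1^{-1}\mathcal{M}_2\bigr),
\]
so that invertibility of $\mathcal{M}_1+\mathcal{M}_2$ reduces to invertibility of $\mathcal{I}+\mathcal{M}_1^{-1}\mathcal{M}_2$. By submultiplicativity of the induced $1$-norm, $\|\mathcal{M}_1^{-1}\mathcal{M}_2\|_1 \le \|\mathcal{M}_1^{-1}\|_1\|\mathcal{M}_2\|_1 < 1$ under the hypothesis.

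Next I would invoke the Neumann series: since $\|\mathcal{M}_1^{-1}\mathcal{M}_2\|_1<1$, the partial sums $\sum_{k=0}^{N}(-\mathcal{M}_1^{-1}\mathcal{M}_2)^{k}$ form a Cauchy sequence in the (complete) space of real square matrices equipped with $\|\cdot\|_1$, and hence converge to some matrix $\mathcal{S}$. Multiplying the identity $(\mathcal{I}+\mathcal{M}_1^{-1}\mathcal{M}_2)\sum_{k=0}^{N}(-\mathcal{M}_1^{-1}\mathcal{M}_2)^{k} = \mathcal{I}-(-\mathcal{M}_1^{-1}\mathcal{M}_2)^{N+1}$ and passing to the limit (the remainder vanishes in norm) shows $(\mathcal{I}+\mathcal{M}_1^{-1}\mathcal{M}_2)\mathcal{S}=\mathcal{I}$, so $\mathcal{I}+\mathcal{M}_1^{-1}\mathcal{M}_2$ is invertible with inverse $\mathcal{S}$. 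The triangle inequality then gives
\[
\bigl\|(\mathcal{I}+\mathcal{M}_1^{-1}\mathcal{M}_2)^{-1}\bigr\|_1 \le \sum_{k=0}^{\infty}\|\mathcal{M}_1^{-1}\mathcal{M}_2\|_1^{k} = \frac{1}{1-\|\mathcal{M}_1^{-1}\mathcal{M}_2\|_1} \le \frac{1}{1-\|\mathcal{M}_1^{-1}\|_1\|\mathcal{M}_2\|_1}.
\]

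Finally I would combine: $(\mathcal{M}_1+\mathcal{M}_2)^{-1} = (\mathcal{I}+\mathcal{M}_1^{-1}\mathcal{M}_2)^{-1}\mathcal{M}_1^{-1}$, and another application of submultiplicativity yields the stated bound \reff{InvW}. There is no genuine obstacle here; the only point that needs a word of care is the convergence of the Neumann series, but this is immediate from the strict contraction $\|\mathcal{M}_1^{-1}\mathcal{M}_2\|_1<1$ together with completeness of the finite-dimensional matrix space. Since the finite-dimensional setting also means one could alternatively argue invertibility by showing $\mathcal{I}+\mathcal{M}_1^{-1}\mathcal{M}_2$ has trivial kernel (if $(\mathcal{I}+\mathcal{M}_1^{-1}\mathcal{M}_2)x=0$ then $\|x\|_1 = \|\mathcal{M}_1^{-1}\mathcal{M}_2 x\|_1 \le \|\mathcal{M}_1^{-1}\mathcal{M}_2\|_1\|x\|_1$, forcing $x=0$), the Neumann series can be reserved solely for producing the quantitative norm bound.
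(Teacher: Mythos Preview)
Your proof is correct and follows essentially the same approach as the paper: factor out $\mathcal{M}_1$, apply the Neumann (geometric) series bound to the remaining factor, and combine via submultiplicativity. The only cosmetic differences are that the paper factors on the right, writing $\mathcal{M}_1+\mathcal{M}_2=(\mathcal{I}+\mathcal{M}_2\mathcal{M}_1^{-1})\mathcal{M}_1$, and simply cites the geometric series theorem rather than spelling out the convergence argument as you do.
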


\begin{proof}
Since $\mathcal{M}_1$ is invertible,  we have
$$\mathcal{M}_1 + \mathcal{M}_2=\left(\mathcal{I}+ \mathcal{M}_2 \mathcal{M}_1^{-1} \right)\mathcal{M}_1.$$
The assumption $\| \mathcal{M}_2 \|_1 < \frac{1}{ \| \mathcal{M}_1^{-1}\|_1}$ implies that
$\| \mathcal{M}_2 \mathcal{M}_1^{-1} \|_1 < 1$.
By the matrix geometric series theorem~\cite{golub1996}, we know that  $\left(\mathcal{I}+ \mathcal{M}_2 \mathcal{M}_1^{-1}\right)^{-1}$ exists and
$$\|\left(\mathcal{I}+ \mathcal{M}_2 \mathcal{M}_1^{-1}\right)^{-1}\|_1 \leq \frac{1}{1-\| \mathcal{M}_2\|_1 \| \mathcal{M}_1^{-1} \|_1}.$$
Therefore, $\mathcal{M}_1 + \mathcal{M}_2$ is invertible with
$$\left(\mathcal{M}_1 + \mathcal{M}_2\right)^{-1}=\mathcal{M}_1^{-1}\left(\mathcal{I}+ \mathcal{M}_2 \mathcal{M}_1^{-1} \right)^{-1},$$
and its norm satisfies
%$$\|\mathcal{W}^{-1}\|_1=\|[\mathcal{I}-\mathcal{L}^{-1}(\mathcal{L}-\mathcal{W})]^{-1}\|_1\|\mathcal{L}^{-1}\|_1\leq \frac{\|\mathcal{L}^{-1}\|_1}{1-\|\mathcal{L}^{-1}\|_1\|\mathcal{L}-\mathcal{W}\|_1}.
$$\|\left(\mathcal{M}_1 + \mathcal{M}_2\right)^{-1}\|_1\leq \| \mathcal{M}_1^{-1} \|_1 \|\left(\mathcal{I}+ \mathcal{M}_2 \mathcal{M}_1^{-1} \right)^{-1}\|_1
\leq \frac{\| \mathcal{M}_1^{-1}\|_1}{1-\| \mathcal{M}_1^{-1}\|_1 \|\mathcal{M}_2 \|_1}. \qed$$
\end{proof}

%Consequently, $\mathcal{L}$ is an M-matrix and $\mathcal{L}^{-1}>0$, and
%$$\|\mathcal{L}^{-1} \|_1 =\gamma^*h^x_* h^y_*=\max\{\gamma^1h^x_1 h^y_1,\gamma^2 h^x_1 h^y_2,\cdots,\gamma^{N_x N_y}h^x_{N_x}h^y_{N_y}\}>0,$$
%where $(\gamma^1,\gamma^2,\cdots,\gamma^{N_x N_y})\mathcal{P}=\underbrace{(1,1,\cdots,1)}_{N_x N_y}\mathcal{L}^{-1}$ and $\gamma^l>0,~l=1,2,\cdots,N_xN_y$.

We now derive a sufficient condition that guarantees the solvability and stability of the linearized problem~\reff{NewtonF2}. Denote by
$$(\gamma^1,\gamma^2,\cdots,\gamma^{N_x N_y}):=\underbrace{(1,1,\cdots,1)}_{N_x N_y}\mathcal{R}^{-1}.$$
Since $\mathcal{R}$ is an M-matrix (cf. \reff{Rres}), we have $\gamma^{l} >0$ for $l=1,2,\cdots,N_xN_y$. We define the index 
$$l_*:= \underset{l=1,2,\cdots,N_xN_y}{\operatorname{argmax}}\left\{\gamma^lh^x_l h^y_l\right\}.$$ 
%Therefore, we have $\gamma^{l_*}= \frac{\|\mathcal{L}^{-1} \|_1}{h^x_{l_*}h^y_{l_*}}.$
\begin{theorem} \label{TheoremInvW}
If 
$$\Delta t^{n+1}  < \frac{1}{4 \gamma^{l_*} \left[\frac{h^y_Mh^x_{l_*}h^y_{l_*}}{(h^x_m)^2h^y_m}+\frac{h^x_Mh^x_{l_*}h^y_{l_*}}{(h^y_m)^2h^x_m}\right] \sum_{l=1}^M(q^l)^2 \|\pmb{\mu}^l\|_{\infty}},$$
then we have that\\
\qquad (1) $\mathcal{W}$ is invertible;\\
\qquad (2) $\| \mathcal{W}^{-1}\|_1\leq \frac{\gamma^{l_*}h^x_{l_*}h^y_{l_*}}{1-4\gamma^{l_*}\Delta t^{n+1}\left[\frac{h^y_Mh^x_{l_*}h^y_{l_*}}{h^y_m(h^x_m)^2}+\frac{h^x_Mh^x_{l_*}h^y_{l_*}}{h^x_m(h^y_m)^2}\right]\sum_{l=1}^M (q^l)^2\|\pmb{\mu}^l\|_{\infty}}$;\\
\qquad (3) The 1-norm condition number of $\mathcal{W}$ in \reff{NewtonF2} satisfies
\begin{equation}\label{ConNum}
\kappa_1(\mathcal{W}):= \| \mathcal{W}\|_1 \| \mathcal{W}^{-1}\|_1 \leq \frac{\gamma^{l_*}h^x_{l_*} h^y_{l_*} \left\{\frac{4}{(h^x_m)^2}+\frac{4}{(h^y_m)^2}+4\Delta t^{n+1}\left[\frac{h^y_M}{h^y_m(h^x_m)^2}+\frac{h^x_M}{h^x_m(h^y_m)^2} \right]\sum_{l=1}^M(q^l)^2 \|\pmb{\mu}^l\|_{\infty} \right\}}{1-4\gamma^{l_*}\Delta t^{n+1}\left[\frac{h^y_Mh^x_{l_*}h^y_{l_*}}{h^y_m(h^x_m)^2}+\frac{h^x_Mh^x_{l_*}h^y_{l_*}}{h^x_m(h^y_m)^2}\right]\sum_{l=1}^M (q^l)^2\|\pmb{\mu}^l\|_{\infty}}.
\end{equation}
\end{theorem}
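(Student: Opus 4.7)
The plan is to decompose $\mathcal{W}=\mathcal{L}+\mathcal{M}$, where
\[
\mathcal{M}:=\sum_{l=1}^M (q^l)^2 \mathcal{A}^{l,-1}(\bu)\,\mathcal{K}(\pmb{\mu}^l,\bu),
\]
and apply Lemma~\ref{InvMat} with $\mathcal{M}_1=\mathcal{L}$ and $\mathcal{M}_2=\mathcal{M}$. This reduces the theorem to two separate norm estimates: an estimate on $\|\mathcal{L}^{-1}\|_1$ and an estimate on $\|\mathcal{M}\|_1$. Once these are in hand, (1) and (2) follow immediately from Lemma~\ref{InvMat}, and (3) follows by combining $\|\mathcal{W}\|_1\leq \|\mathcal{L}\|_1+\|\mathcal{M}\|_1$ with Lemma~\ref{lemmaL} and the bound on $\|\mathcal{W}^{-1}\|_1$.

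For the first estimate, I would exploit the factorization $\mathcal{L}=\mathcal{P}^{-1}\mathcal{R}$ that appears in the proof of Lemma~\ref{lemmaL}. Since $\mathcal{R}$ was shown there to be a symmetric positive definite M-matrix, $\mathcal{R}^{-1}>0$ entrywise, and hence $\mathcal{L}^{-1}=\mathcal{R}^{-1}\mathcal{P}>0$ entrywise. The vector of column sums of $\mathcal{L}^{-1}$ is therefore
\[
\underbrace{(1,\dots,1)}_{N_xN_y}\,\mathcal{L}^{-1}=(\gamma^1,\dots,\gamma^{N_xN_y})\,\mathcal{P}=(\gamma^1 h^x_1 h^y_1,\dots,\gamma^{N_xN_y} h^x_{N_x}h^y_{N_y}),
\]
and positivity lets me read off $\|\mathcal{L}^{-1}\|_1=\max_l \gamma^l h^x_l h^y_l = \gamma^{l_*}h^x_{l_*}h^y_{l_*}$ by the definition of $l_*$.

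For the second estimate, I would use submultiplicativity $\|\mathcal{M}\|_1\leq \sum_{l=1}^M (q^l)^2 \|\mathcal{A}^{l,-1}(\bu)\|_1\,\|\mathcal{K}(\pmb{\mu}^l,\bu)\|_1$. The bound $\|\mathcal{A}^{l,-1}\|_1\leq 1/(h^x_m h^y_m)$ was already established inside the proof of Theorem~\ref{t:CondNumber}. The remaining piece is to control $\|\mathcal{K}(\pmb{\mu}^l,\bu)\|_1$: the entries of $\mathcal{K}$ listed in Appendix~\ref{ApxA} come from differentiating the same harmonic-mean off-diagonal entries of $\mathcal{A}^l$ that produced the row/column bounds in \reff{Acol}, so column sums of the absolute values are dominated by the same geometric factor times $\|\pmb{\mu}^l\|_\infty$, giving
\[
\|\mathcal{K}(\pmb{\mu}^l,\bu)\|_1 \leq 4\Delta t^{n+1}\Big[\tfrac{h^y_M}{h^x_m}+\tfrac{h^x_M}{h^y_m}\Big]\|\pmb{\mu}^l\|_\infty.
\]
Multiplying through by $\|\mathcal{A}^{l,-1}\|_1\leq 1/(h^x_m h^y_m)$ and summing in $l$ produces the bound on $\|\mathcal{M}\|_1$ that matches the form in the statement. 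The hypothesis on $\Delta t^{n+1}$ then asserts exactly that $\|\mathcal{L}^{-1}\|_1\|\mathcal{M}\|_1<1$, which is precisely the hypothesis of Lemma~\ref{InvMat}.

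Applying Lemma~\ref{InvMat} yields invertibility of $\mathcal{W}$ and the denominator $1-\|\mathcal{L}^{-1}\|_1\|\mathcal{M}\|_1$ in the norm bound, which after substituting the two estimates above is the expression in (2). For part (3), I combine $\|\mathcal{W}\|_1\leq \|\mathcal{L}\|_1+\|\mathcal{M}\|_1$ with Lemma~\ref{lemmaL} (giving $\|\mathcal{L}\|_1<4/(h^x_m)^2+4/(h^y_m)^2$) and the bound on $\|\mathcal{M}\|_1$, then multiply by the bound on $\|\mathcal{W}^{-1}\|_1$ from (2). The main technical obstacle is the entrywise control of $\mathcal{K}$: one must check carefully that differentiating the harmonic-mean weights with respect to the potential only introduces extra factors bounded by $\|\pmb{\mu}^l\|_\infty$ and does not amplify the mesh-size dependence beyond what the diagonal/off-diagonal estimates of $\mathcal{A}^l$ already permit. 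Everything else is bookkeeping around Lemma~\ref{InvMat}.
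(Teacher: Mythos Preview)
Your proposal is correct and follows essentially the same route as the paper: decompose $\mathcal{W}=\mathcal{L}+(\mathcal{W}-\mathcal{L})$, bound $\|\mathcal{K}^l\|_1$ via the entrywise estimates in Appendix~\ref{ApxA} and $\|\mathcal{A}^{l,-1}\|_1$ via Theorem~\ref{t:CondNumber}, then invoke Lemma~\ref{InvMat} and Lemma~\ref{lemmaL}. Your explicit identification $\|\mathcal{L}^{-1}\|_1=\gamma^{l_*}h^x_{l_*}h^y_{l_*}$ from $(1,\dots,1)\mathcal{L}^{-1}=(\gamma^1,\dots,\gamma^{N_xN_y})\mathcal{P}$ is the step the paper uses but does not spell out; otherwise the arguments coincide.
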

\begin{proof}
As shown in {\bf Appendix} \ref{ApxA}, we have 
\begin{equation}\label{Bpro}
\left\{
\begin{aligned}
&\sum_{m=1}^{N_xN_y} \mathcal{K}^l_{m,n} =\sum_{n=1}^{N_xN_y} \mathcal{K}^l_{m,n} =0 \quad \mbox{for}~ m,n=1, \dots, N_xN_y,\\
&0<\mathcal{K}^l_{n,n} <2\Delta t^{n+1}(\frac{h^y_M}{h^x_m}+\frac{h^x_M}{h^y_m})\|\pmb{\mu}^l\|_{\infty} \quad \mbox{for}~ n=1, \dots, N_xN_y,\\
&-\Delta t^{n+1}\max\{\frac{h^y_M}{h^x_m},\frac{h^x_M}{h^y_m}\}\|\pmb{\mu}^l\|_{\infty}<\mathcal{K}^l_{m,n} <0 \quad \mbox{for}~ n,m=1, \dots, N_xN_y,~ \mbox{and}~ m\neq n.
\end{aligned}
\right.
\end{equation}
This implies that 
$$\|\mathcal{K}^l\|_1\leq 4\Delta t^{n+1} \left(\frac{h^y_M}{h^x_m}+\frac{h^x_M}{h^y_m}\right)\|\pmb{\mu}^l \|_{\infty}.$$
From the proof of Theorem~\ref{t:CondNumber}, we know that $\|\mathcal{A}^{l,-1}\|_1\leq \frac{1}{h^x_m h^y_m}$. Therefore, we have 
\begin{equation}
\left\|\sum_{l=1}^M(q^l)^2\mathcal{A}^{l,-1}(\bu^k)\ \mathcal{K}(\pmb{\mu}^k,\bu^k)\right\|_1
\leq 4\Delta t^{n+1} \left[\frac{h^y_M}{(h^x_m)^2h^y_m}+\frac{h^x_M}{(h^y_m)^2h^x_m}\right]\sum_{l=1}^M(q^l)^2 \|\pmb{\mu}^l \|_{\infty}.
\end{equation}
If
\[
\Delta t^{n+1}  < \frac{1}{4 \gamma^{l_*} \left[\frac{h^y_Mh^x_{l_*}h^y_{l_*}}{(h^x_m)^2h^y_m}+\frac{h^x_Mh^x_{l_*}h^y_{l_*}}{(h^y_m)^2h^x_m}\right] \sum_{l=1}^M(q^l)^2 \|\pmb{\mu}^l\|_{\infty}},
\]
we have $$\left\|\mathcal{W}-\mathcal{L} \right\|_1 < \frac{1}{\|\mathcal{L}^{-1} \|_1}.$$
It follows from Lemma \reff{InvMat} that $\mathcal{W}$ is invertible. By \reff{InvW}, we further have
$$\| \mathcal{W}^{-1}\|_1\leq \frac{\|\mathcal{L}^{-1} \|_1}{1-\|\mathcal{L}^{-1} \|_1\|\mathcal{L}-\mathcal{W} \|_1}\leq \frac{\gamma^{l_*}h^x_{l_*}h^y_{l_*}}{1-4\gamma^{l_*}\Delta t^{n+1}\left[\frac{h^y_Mh^x_{l_*}h^y_{l_*}}{h^y_m(h^x_m)^2}+\frac{h^x_Mh^x_{l_*}h^y_{l_*}}{h^x_m(h^y_m)^2}\right]\sum_{l=1}^M (q^l)^2\|\pmb{\mu}^l\|_{\infty}}.$$
%It is obvious from \reff{DrePoisson} that
%$$\|\mathcal{L}\|_1\leq\frac{4}{h^{x,2}_m}+\frac{4}{h^{y,2}_m}.$$
%Therefore, it follows from \reff{NewtonW} that
By Lemma~\ref{lemmaL}, we obtain
\begin{equation}
\begin{aligned}
\|\mathcal{W} \|_1 &\leq \|\mathcal{L}\|_1+\sum_{l=1}^M(q^l)^2\|\mathcal{A}^{l,-1}(\bu^k)\|_1 \|\mathcal{K}(\pmb{\mu}^{l,k},\bu^k)\|_1\\
&\leq \frac{4}{(h^x_m)^2}+\frac{4}{(h^y_m)^2}+\sum_{l=1}^M4\Delta t^{n+1}(q^l)^2\left(\frac{h^y_M}{h^y_m(h^x_m)^2}+\frac{h^x_M}{h^x_m(h^y_m)^2} \right)\|\pmb{\mu}^l\|_{\infty}.
\end{aligned}
\end{equation}
By the definition of $1$-norm condition number, we complete the proof of \reff{ConNum}. \qed
\end{proof}

%\begin{remark}\label{rk1}
%When uniform grid ($h^x_i = h^y_j = h$) is used, the above sufficient condition to guarantee invertible $\mathcal{W}$  becomes
%$$8\Delta t^{n+1} \sum_{l=1}^M(q^l)^2 \|\pmb{\mu}^l \|_{\infty}< \frac{1}{\gamma^{l_*}}.$$
%\end{remark}

\section{Numerical tests}\label{s:Numerics}
We perform numerical simulations to show numerical accuracy of the developed numerical methods and their effectiveness in preserving mass conservation, positivity, and free-energy dissipation. The advantage of using the adaptive time stepping strategy is demonstrated through an example in which electrolytes are exposed to sudden alternating applied potentials over two electrodes. Furthermore, we apply the developed numerical methods to characterize the charge dynamics of electrolytes between two parallel electrodes with sinusoidal applied potentials.     Unless otherwise specified,  we use a uniform mesh with grid spacing $h^x_i = h^y_j$  in the following simulations. The stopping tolerance in the Newton's iterations is set to be $10^{-10}$.

%The tolerance of the Newton iteration method is set to be $10^{-10}$; the tolerance and the restart times of the BiCGSTAB method for solving the linear equation are $10^{-8}$ and $100$, respectively.

\subsection{Accuracy and efficiency}
We consider an electrolyte solution with symmetric monovalent ions. To test the accuracy of our methods, we consider the following constructed problem in 2D:
\begin{equation}
\left\{
\begin{aligned}
&\partial_t c^1=\nabla\cdot(\nabla c^1 +c^1 \nabla\psi)+f_1,\\
&\partial_t c^2=\nabla\cdot(\nabla c^2 -c^2 \nabla\psi)+f_2,\\
&-\kappa\Delta \psi=c^1-c^2+\rho^f.
\end{aligned}
\right.
\end{equation}
The functions $f_1$, $f_2$, and $\rho^f$ are determined by the following exact solution
\begin{equation}
\left\{
\begin{aligned}
&c^1=\pi^2 e^{-t}\cos(\pi x)\cos(\pi y)/5+2,\\
&c^2=\pi^2 e^{-t}\cos(\pi x)\cos(\pi y)/5+2,\\
&\psi=e^{-t}\cos(\pi x)\cos(\pi y).
\end{aligned}
\right.
\end{equation}
The initial and boundary conditions are obtained by evaluating the exact solution at $t=0$ and the boundary of a  computational box, respectively.
\begin{table}[H]
\centering
\begin{tabular}{ccccccc}
\hline\hline
$h$ & $l^\infty$ error in $c^1$ & Order&$l^\infty$ error in $c^2$ & Order&$l^\infty$ error in $\psi$ & Order\\
\hline
$\frac{1}{10}$&1.30e-02 &- &1.41e-02 &- & 1.00e-03&-\\
$\frac{1}{20}$&3.40e-03 &1.93 &3.70e-03 &1.93 & 2.70e-04&1.89\\
$\frac{1}{30}$&1.50e-03 &2.01 &1.60e-03 &2.06 & 1.21e-04&1.97\\
$\frac{1}{40}$&8.45e-04 &1.99 &9.26e-04 &1.90 & 6.87e-05&1.98\\
$\frac{1}{50}$&5.42e-04 &1.99 &5.94e-04 &1.99 & 4.41e-05&1.99\\
\hline\hline
\end{tabular}
\caption{Numerical error and convergence order of numerical solutions at time $T=0.1$.}
\label{t:OrderTab}
\end{table}

We test the numerical accuracy of the proposed numerical method using various spatial step size $h$ with a fixed mesh ratio $\Delta t=h^2$. Table \ref{t:OrderTab} lists $l^\infty$ errors and convergence orders for ionic concentration and electrostatic potential at time $T=0.1$.  We observe that the error decreases as the mesh refines, and that the convergence orders for ion concentrations and the potential are both about $2$. This indicates that the fully-implicit scheme~\reff{ImplicitScheme}, as expected,  is first-order and second-order accurate in time and spatial discretization, respectively. Note that the mesh ratio  chosen here is for the purpose of numerical accuracy test, not for the purpose of the stability or positivity.

To demonstrate the advantage of the proposed Newton's iteration method in computational efficiency, we compare the Newton's iteration method with a simple fixed-point iteration method for solving the nonlinear discrete system~\eqref{disPNP} in terms of computational time and iteration steps. We solve the problem using various mesh resolution with a mesh ratio $\Delta t=h/10$. As shown in Table~\ref{t:NewVsFix}, the number of iteration steps of the Newton's method is about $2$ in each time step evolution, which is significantly fewer than that of the fixed-point method. Also, the fixed-point method takes roughly twofold to threefold longer computational time. 
The computational advantage makes the proposed Newton's iteration method promising in studying complex ion transport problems.
\begin{table}[H]
\centering
\begin{tabular}{ccccccc}
\hline  \hline
 \multirow{2}{*}{Mesh Size} &\multicolumn{2}{c}{Newton's method} &\multicolumn{2}{c}{Fixed-point method} \\
 \cline{2-3}  \cline{4-5}  \cline{6-7}
 & Computational time & Iteration steps & Computational time & Iteration steps \\
 \hline
$50^2$ & 82s & 2& 194s & 21\\
$100^2$ & 1243s & 2& 3615s & 21\\
$150^2$ & 6892s & 2& 20274s & 20\\
$200^2$ & 24798s & 2& 73357s & 19\\
$250^2$ & 68512s & 2& 191746s & 19\\
$300^2$ & 155486s & 2& 404593s & 19\\
 \hline  \hline
\end{tabular}
\caption{Computational time and iteration steps of the proposed Newton's iteration method and fixed-point iteration method in each time step evolution up to time $T=0.1$.}\label{t:NewVsFix}
\end{table}

\subsection{Conservation and energy dissipation}
In this case, we consider a closed, neutral system that consists of symmetric monovalent ions with the following initial and boundary conditions
\begin{equation}\label{InitBouCons}
\left\{
\begin{aligned}
&\psi(t,0,y)=0,~\psi(t,1,y)=1, \quad y\in[0,1],\\
&\frac{\partial\psi}{\partial y}(t,x,0)=\sin\left(\pi x\right),~\frac{\partial\psi}{\partial y}(t,x,1)=-\sin\left( \pi x\right),~~x\in [0,1],\\
&c^1(0,x,y)=1,~ c^2(0,x,y)=1, \quad (x,y)\in [0,1]\times[0,1],  \\
 &\frac{\partial c^1}{\partial  \textbf{n}}+c^1 \frac{\partial \psi}{\partial  \textbf{n}} =0\quad \mbox{and}\quad \frac{\partial c^2}{\partial  \textbf{n}}-c^2 \frac{\partial \psi}{\partial  \textbf{n}} =0\quad \text{on } \partial\Omega.
\end{aligned}
\right.
\end{equation}
The prescribed potential boundary conditions represent that a potential difference is applied horizontally and the upper and lower boundaries carry surface charges with opposite signs.  With such zero-flux boundary conditions and time-independent boundary potentials, the system has properties of mass conservation and free-energy dissipation. 

\begin{figure}[H]
\centering
\includegraphics[scale=.55]{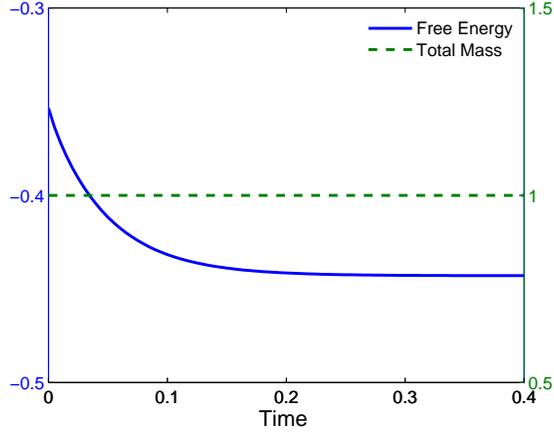}
\caption{ Profiles of the free energy (solid line) and total ion concentrations (dotted line) against time evolution.}
\label{f:MassEnergy}
\end{figure}
As displayed in Fig.~\ref{f:MassEnergy}, our numerical method perfectly conserves the total ion concentration, and the discrete free energy \reff{Fh} decays monotonically and robustly. In addition, our numerical solutions of concentrations are all positive, being consistent with our analysis on solution positivity.

\subsection{Adaptive time stepping}
\begin{figure}[H]
\centering
\includegraphics[scale=.55]{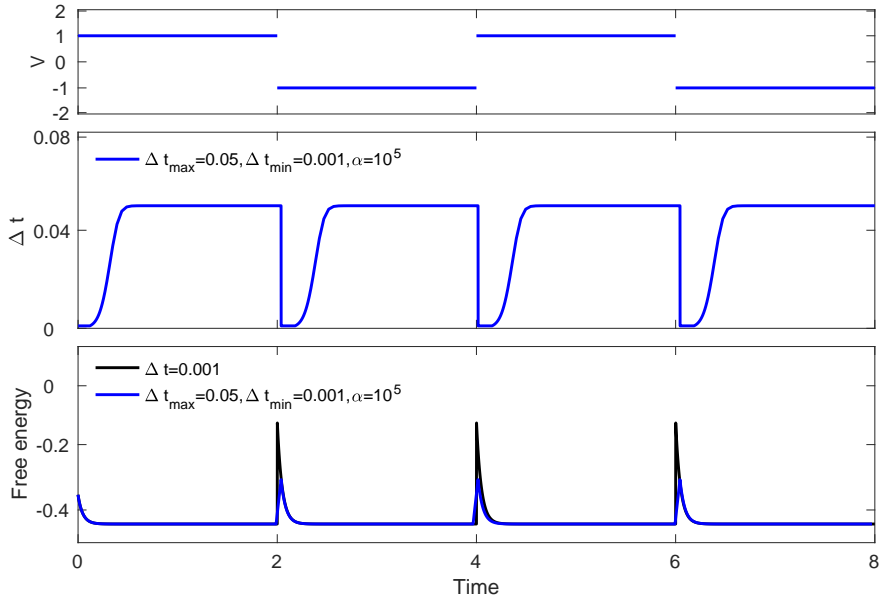}
\caption{Upper: The evolution of the applied potential $V(t)$; Middle: The adaptive time stepping size versus time; Lower: Free-energy evolution with a uniform time-stepping size $\Delta t=0.001$ (black) and adaptive time stepping sizes with parameters $\Delta t_{\text{max}}=0.05$, $\Delta t_{\text{min}}=0.001$, and $\alpha=10^5$ (blue). }
 \label{f:TimeStep}
\end{figure}

We now study the time-marching stability of the numerical scheme and the efficiency improvement by using the strategy of adaptive time stepping in solving problems in which electrolytes are exposed to sudden alternating applied potentials. 
We consider the same initial and boundary conditions as in~\reff{InitBouCons} except that $\psi(t,1,y)=V(t)$ for $y\in[0,1]$ with
\[
V(t)=\chi_{[0,2)}-\chi_{[2,4)}+\chi_{[4,6)}-\chi_{[6,8)},
\]
where $\chi_{[\cdot, \cdot)}$ is the characteristic function of a time interval. That is, a periodically alternating potential $V(t)$ is applied horizontally across the computational domain; cf.~the upper plot of the Fig.~\ref{f:TimeStep}.

We investigate the effectiveness of adaptive time-stepping techniques using the form of \reff{AdaptiveT} with a parameter setting $\Delta t_{\text{max}}=0.05$, $\Delta t_{\text{min}}=0.001$, and $\alpha=10^5$.  The adaptive time step evolution is presented in the middle plot of the Fig.~\ref{f:TimeStep}, and the corresponding free-energy evolution profile is displayed in the lower plot of the Fig.~\ref{f:TimeStep}. For comparison, we also present the free-energy evolution profile computed with a uniform time step size ($\Delta t=0.001$) in the same plot. As the applied potential changes periodically, the free energy correspondingly undergoes large, abrupt changes. This in turn leads to drastic decrease of the time stepping size to the minimum value $\Delta t_{\text{min}}$. One can observe that, for a fixed applied potential, the free energy quickly relaxes and the corresponding time step size increases to its maximum value $\Delta t_{\text{max}}$.    
%We can observe that periodic change. We observe that the time step is relaxed when $|F'(t)|$ becomes smaller, and $\Delta t_{min}$ is employed whenever the free energy undergoes large changes.  
In contrast to $8000$ steps with $\Delta t=\Delta t_{\text{min}}=0.001$ to reach $T=8$, the adaptive time stepping only takes a total of $877$ steps, which have about $89\%$ reduction in time steps. Of interest is that the free-energy evolution profile computed with adaptive time stepping is almost identical to that with $\Delta t=0.001$, with minor discrepancy due to the resolution of time. This indicates that the strategy of adaptive time stepping is useful in speeding up computations of problems that have applied potentials with sudden changes. 

\subsection{Charge Dynamics}
\begin{figure}[H]
\centering
\includegraphics[scale=.45]{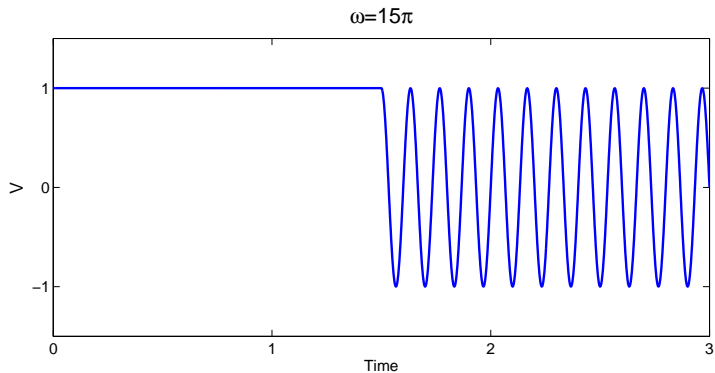}
\caption{The applied potential $V(t)$ with $\omega=15\pi$. It keeps as a constant for the time interval $[0, 1.5)$ and continuously becomes a sinusoidal function for $[1.5, +\infty)$. }
 \label{f:AppliedV}
\end{figure}
We apply the proposed numerical approaches to probe the charge dynamics in electrolytes between two parallel electrodes with sinusoidal applied potentials. Electrolytes under alternating current (AC) have wide range of applications, including AC electroosmosis (ACEO) pumps, cyclic voltammetry, and dielectrophoresis~\cite{BTA:PRE:04, BazantReview_ACIS09, AmeriMiller_PRL18}. Numerical simulation with sinusoidal applied potentials of large frequency is computationally challenging. It sets high demand on the stability of numerical integration methods. 

We consider a closed, neutral system that consists of binary monovalent ions. Due to geometry symmetry, the problem can be reduced to one dimension on a computational domain $[-1, 1]$, after nondimensionalization.  We consider the same initial and boundary conditions as in~\reff{InitBouCons} except that the left electrode is kept grounded, i.e., $\psi(t,-1)=0$, and $\psi(t,1)=V(t)$ with
\[
V(t)=\chi_{[0, 1.5)}+\chi_{[1.5, +\infty)}\sin\left(\omega t\right),
\]
%\begin{equation}
%\left\{
%\begin{aligned}
%&\psi(t,0)=0, \quad t\in[0,3],\\
%&\psi(t,1)=1,t\in[0,1.5];~\psi(t,1)=\sin\left(\omega t\right),t\in[1.5,3],\\
%&c^1(0,x)=1,~ c^2(0,x)=1, \quad x\in [0,1],  \\
% &\frac{\partial c^1}{\partial  \textbf{n}}+c^1 \frac{\partial \psi}{\partial  \textbf{n}} =0\quad \mbox{and}\quad \frac{\partial c^2}{\partial  \textbf{n}}-c^2 \frac{\partial \psi}{\partial  \textbf{n}} =0\quad \text{on } \partial\Omega.
%\end{aligned}
%\right.
%\end{equation}
%\begin{equation}
%\left\{
%\begin{aligned}
%&\psi(t,0)=0, \quad t\in[0,3],\\
%&\psi(t,1)=1,t\in[0,1.5];~\psi(t,1)=\sin\left(\omega t\right),t\in[1.5,3],\\
%&c^1(0,x)=1,~ c^2(0,x)=1, \quad x\in [0,1],  \\
% &\frac{\partial c^1}{\partial  \textbf{n}}+c^1 \frac{\partial \psi}{\partial  \textbf{n}} =0\quad \mbox{and}\quad \frac{\partial c^2}{\partial  \textbf{n}}-c^2 \frac{\partial \psi}{\partial  \textbf{n}} =0\quad \text{on } \partial\Omega.
%\end{aligned}
%\right.
%\end{equation}
where $\omega$ is the angular frequency; cf.~Fig.~\ref{f:AppliedV}. 
\begin{figure}[H]
\centering
\includegraphics[scale=.55]{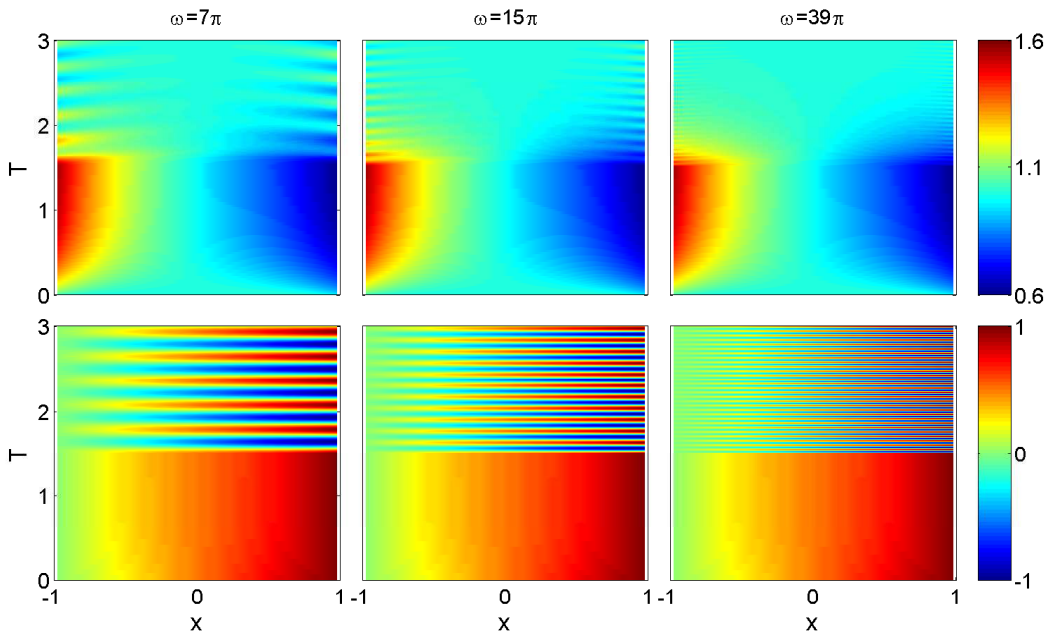}
\caption{Evolution of cation distribution (upper row) and electrostatic potential (lower row) with various angular frequency $\omega$.}
\label{f:ChargeDyms}
\end{figure}

In Fig.~\ref{f:ChargeDyms}, we consider the dynamics of concentrations and electrostatic potential with various angular frequency. For $T<1.5$, the potential difference across two electrodes attracts oppositely charged ions from the bulk, forming electric double layers (EDLs) in the vicinity of electrodes. In the meantime, the electrostatic potential gets screened by the charges in the EDLs.  After the charging phase, the boundary potential becomes sinusoidal and the electrostatic potential across the system responds instantaneously. The ionic concentration close to the electrodes oscillates correspondingly. However, the magnitude of oscillation decays as time evolves, because charges in the EDLs are gradually released to the bulk. 

With larger angular frequency, the electrostatic potential inside still can follow the boundary potential instantaneously. However, the ionic concentration fall behind the potential oscillation and the potential only partially gets screened. At lower frequency, ions can travel much longer distances during each AC cycle; therefore, the oscillatory effect extend farther away from the electrode surface, with more pronounced impact on the structure of EDLs.  For $\omega=39\pi$, we barely observe oscillations in concentration for $2<T<3$. 

\begin{figure}[H]
\centering
\includegraphics[scale=.65]{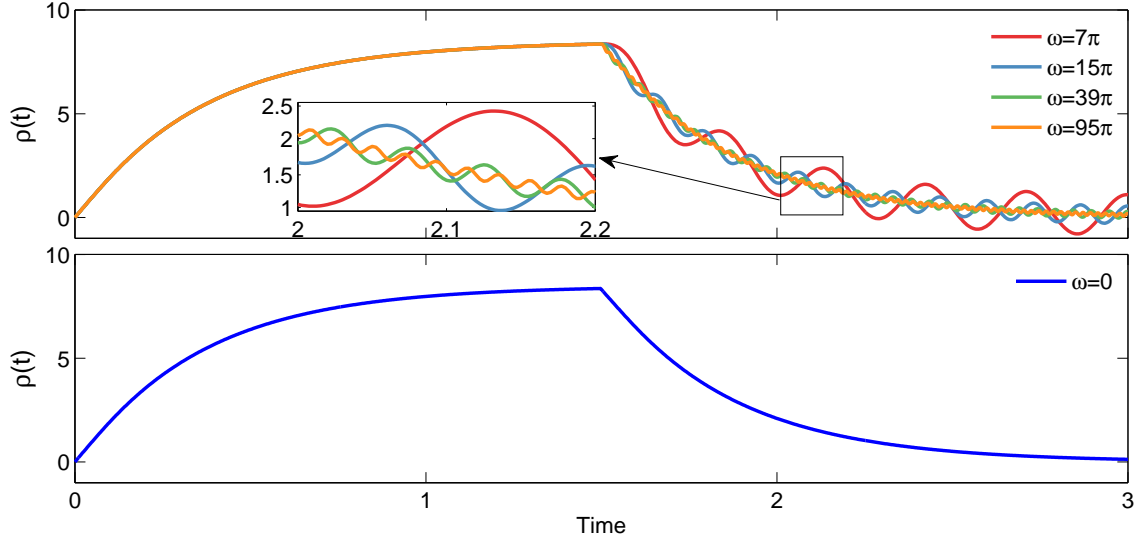}
\caption{Evolution of total net charges $\rho(t)$ with various angular frequency $\omega$.}
 \label{f:rhot}
\end{figure}
To further understand charge dynamics with sinusoidal applied potentials, we also study the evolution of total net charges in left half of the electrolytes~\cite{BTA:PRE:04}:
$$\rho(t)=\int_{-1}^0 \sum_{l=1}^2 q^l c^l(x)dx.$$
As seen from Fig.~\ref{f:rhot}, the total net charges increase quickly and reach a plateau with a constant applied voltage in the charging phase, i.e., $0<T<1.5$. Later, the total net charges decay oscillatorily to zero at $T=3$, which indicates that the discharging phase with sinusoidal applied potentials takes roughly the same time as the charging phase. For lower angular frequency, the total net charges have larger magnitude of oscillation, being consistent with the results shown in Fig.~\ref{f:ChargeDyms}. For $\omega=95\pi$, the oscillation in the profile of $\rho(t)$ is extremely small. Larger angular frequency leads to shrinking oscillation magnitude, indicating that the effect of applied potentials with rather large frequency is equivalent to a zero applied potential. This observation is further confirmed by the lower plot in the Fig.~\ref{f:rhot}, which is obtained with a zero applied potential ($\omega=0$) for $T>1.5$. We can see that the profile of the total net charges is almost identical to the case with $\omega=95\pi$.

\section{Conclusions}\label{s:Conclusions}
The Poisson--Nernst--Planck (PNP) equations are a classical model to describe ion transport, which is fundamental to many applications.  In this work, we have developed finite difference schemes for solving the multi-dimensional PNP equations with multiple ionic species. Numerical analysis has shown that the schemes are able to guarantee mass conservation, positivity, and free-energy dissipation at fully discrete level.  The novelty of numerical schemes lies in using the harmonic-mean approximations in the spatial discretization of the Nernst--Planck equations. In addition, we have proposed a new Newton's method to efficiently solve a fully implicit nonlinear system resulting from discretization. The improved computational efficiency of the Newton's method originates from the usage of the electrostatic potential as the iteration variable, instead of both the potential and concentration of multiple ionic species.  Thanks to the harmonic-mean approximations, we have been able to rigorously establish the solvability and stability of the linearized problem in the Newton's method and estimates on the upper bound of condition numbers of coefficient matrices in linear systems that are solved iteratively. Extensive numerical tests have been performed to corroborate the anticipated numerical accuracy, computational efficiency, and structure-preserving properties of the developed schemes. Also, numerical simulations have shown that a strategy of adaptive time stepping is able to speed up simulation of problems with time-dependent, alternating boundary potentials. Finally, the proposed numerical approaches have been applied to understand ion transport in response to a sinusoidal applied potential. Such numerical simulations demonstrate that the developed numerical approaches are promising in solving complex, realistic ion transport problems.

We now discuss several issues and possible further refinements of our work. The current development of numerical methods only considers a regular domain. However, it is of practical interest to extend the developed numerical methods for irregular computational domains, such as irregular geometry considered in ion-channel applications. The discretization accuracy on irregular boundaries deserves further attention, and discrete structure-preserving properties, such as positivity preservation and mass conservation, are highly desirable to maintain. Because these numerical properties are crucial to the validity of numerical solutions. It is also desirable to pursue second-order temporal discretization schemes that have structure-preserving properties.  In addition, it will be our future work to extend the proposed discretization schemes and the Newton's method to solve other modified PNP models that account for steric effects, Coulomb ionic correlations, and inhomogeneous dielectric effects~\cite{Bazant_PRE07I, BazantReview_ACIS09,  HyonLiuBob_CMS10, BZLu_BiophyJ11, HyonLiuBob_JPCB12, LiuJiXu_SIAP18, GavishLiuEisenberg_JPCB18}.

\vskip 5mm

\noindent{\bf Acknowledgments.}
J. Ding was supported by Postgraduate Research~\& Practice Innovation Program of Jiangsu Province and National Natural Science Foundation of China (No. 21773165, 11601361,  11771318, and 11790274).  S. Zhou was supported by National Natural Science Foundation of China (No. 21773165 and 11601361) and National Key R\&D Program of China  (No. 2018YFB0204404).

\appendix
%\begin{appendix}
\numberwithin{equation}{section}
\makeatletter
% "activate" the preparatory code, but for section-level headers only
\newcommand{\section@cntformat}{Appendix \thesection:\ }
\makeatother

\section{Appendix: the matrix $\mathcal{K}(\pmb{\mu}^{l},\bu)$}
\label{ApxA}
\setcounter{equation}{0}
To facilitate the presentation, we denote by $S^l=q^l\bu$ and introduce the following discrete operators
$$\mathcal{E}^{\pm}_x S^{l,n}_{i,j}=\frac{e^{S^{l,n}_{i\pm 1,j}-S^{l,n}_{i,j}}}{(1+e^{S^{l,n}_{i\pm 1,j}-S^{l,n}_{i,j}})^2},~ \mathcal{E}^{\pm}_y S^{l,n}_{i,j}=\frac{e^{S^{l,n}_{i,j\pm 1}-S^{l,n}_{i,j}}}{(1+e^{S^{l,n}_{i,j\pm 1}-S^{l,n}_{i,j}})^2}.$$
We now examine elements in each column of the matrix $\mathcal{K}$. For the $k$th column with $k=[i,j]$,  the column elements are related to  coefficients in the discretization stencils of the Nernst--Planck equations, associated to the grid point $\{x_i, y_j\}$. Thus, we look into all of the discretization stencils for different types of grid points.

First, for any interior grid point $\{x_i, y_j\}$ with $i=2, \dots , N_x-1$ and $j=2, \dots , N_y-1$,  non-zero entries of the $k$th column ($k=[i,j]$) are given by
\begin{equation*}
\mathcal{K}^l_{m,k}=\left\{
\begin{aligned}
& -2\Delta t^{n+1} \frac{h^y_j}{h^x_{i-\frac{1}{2}}}\mathcal{E}^{-}_x S^{l,n}_{i,j}\mu^{l,n}_{m+N_y}-2\Delta t^{n+1}\frac{h^y_j}{h^x_{i-\frac{1}{2}}}\mathcal{E}^{+}_x S^{l,n}_{i-1,j}\mu^{l,n}_m ,\quad m=[i-1,j],\\
& -2\Delta t^{n+1} \frac{h^x_i}{h^y_{j-\frac{1}{2}}}\mathcal{E}^{+}_y S^{l,n}_{i,j-1}\mu^{l,n}_m-2\Delta t^{n+1} \frac{h^x_i}{h^y_{j-\frac{1}{2}}}\mathcal{E}^{-}_y S^{l,n}_{i,j} \mu^{l,n}_{m+1},\quad m=[i,j-1],\\
& 2\Delta t^{n+1} \left(\frac{h^y_j}{h^x_{i+\frac{1}{2}}}  \mathcal{E}^{+}_x S^{l,n}_{i,j}+  \frac{h^y_j}{h^x_{i-\frac{1}{2}}}\mathcal{E}^{-}_x S^{l,n}_{i,j}
 +\frac{h^x_i}{h^y_{j+\frac{1}{2}}}\mathcal{E}^{+}_y S^{l,n}_{i,j}+ \frac{h^x_i}{h^y_{j-\frac{1}{2}}}\mathcal{E}^{-}_y S^{l,n}_{i,j} \right)\mu^{l,n}_m\\& \quad + 2\Delta t^{n+1}\frac{h^y_j}{h^x_{i-\frac{1}{2}}}\mathcal{E}^{+}_x S^{l,n}_{i-1,j}\mu^{l,n}_{m-N_y}+2\Delta t^{n+1}\frac{h^y_j}{h^x_{i+\frac{1}{2}}}\mathcal{E}^{-}_x S^{l,n}_{i+1,j}\mu^{l,n}_{m+N_y}\\& \quad +2\Delta t^{n+1}\frac{h^x_i}{h^y_{j+\frac{1}{2}}}\mathcal{E}^{-}_y S^{l,n}_{i,j+1}\mu^{l,n}_{m+1}+2\Delta t^{n+1} \frac{h^x_i}{h^y_{j-\frac{1}{2}}}\mathcal{E}^{+}_y S^{l,n}_{i,j-1}\mu^{l,n}_{m-1}, ~m=k,\\
&-2\Delta t^{n+1}\frac{h^x_i}{h^y_{j+\frac{1}{2}}}\mathcal{E}^{-}_y S^{l,n}_{i,j+1}\mu^{l,n}_{m+1}-2\Delta t^{n+1} \frac{h^x_i}{h^y_{j+\frac{1}{2}}}\mathcal{E}^{+}_y S^{l,n}_{i,j}\mu^{l,n}_{m},\quad m=[i,j+1],\\
&-2\Delta t^{n+1}\frac{h^y_j}{h^x_{i+\frac{1}{2}}}\mathcal{E}^{-}_x S^{l,n}_{i+1,j}\mu^{l,n}_{m+N_y}-2\Delta t^{n+1}\frac{h^y_j}{h^x_{i+\frac{1}{2}}}  \mathcal{E}^{+}_x S^{l,n}_{i,j}\mu^{l,n}_{m},\quad m=[i+1,j].\\
\end{aligned}
\right.
\end{equation*}
Hence, the conclusion \reff{Bpro} holds for the columns associated to interior grid points. We examine the conclusion for boundary grid points that are adjacent to four edges but not corner vertices.  We consider Dirichlet boundary grid points $\{x_1, y_j\}$ for $j=2, \dots , N_y-1$, and Neumann boundary grid points $\{x_i, y_1\}$ for $i=2, \dots , N_x-1$. Non-zero entries of the $k$th column ($k=[1,j]$) are
\begin{equation*}
\mathcal{K}^l_{m,k}=\left\{
\begin{aligned}
& -2\Delta t^{n+1} \frac{h^x_1}{h^y_{j-\frac{1}{2}}}\mathcal{E}^{+}_y S^{l,n}_{1,j-1}\mu^{l,n}_{m}-2\Delta t^{n+1} \frac{h^x_1}{h^y_{j-\frac{1}{2}}}\mathcal{E}^{-}_y S^{l,n}_{1,j} \mu^{l,n}_{m+1},\quad m=[1,j-1],\\
& 2\Delta t^{n+1} \left(\frac{h^y_j}{h^x_{\frac{3}{2}}}  \mathcal{E}^{+}_x S^{l,n}_{1,j}
 +\frac{h^x_1}{h^y_{j+\frac{1}{2}}}\mathcal{E}^{+}_y S^{l,n}_{1,j}+ \frac{h^x_1}{h^y_{j-\frac{1}{2}}}\mathcal{E}^{-}_y S^{l,n}_{1,j} \right)\mu^{l,n}_{m}+2\Delta t^{n+1}\frac{h^y_j}{h^x_{\frac{3}{2}}}\mathcal{E}^{-}_x S^{l,n}_{2,j}\mu^{l,n}_{m+N_y} \\
 & \quad +2\Delta t^{n+1}\frac{h^x_1}{h^y_{j+\frac{1}{2}}}\mathcal{E}^{-}_y S^{l,n}_{1,j+1}\mu^{l,n}_{m+1}+2\Delta t^{n+1} \frac{h^x_1}{h^y_{j-\frac{1}{2}}}\mathcal{E}^{+}_y S^{l,n}_{1,j-1}\mu^{l,n}_{m-1}, ~m=k,\\
&-2\Delta t^{n+1}\frac{h^x_1}{h^y_{j+\frac{1}{2}}}\mathcal{E}^{-}_y S^{l,n}_{1,j+1}\mu^{l,n}_{m}-2\Delta t^{n+1} \frac{h^x_1}{h^y_{j+\frac{1}{2}}}\mathcal{E}^{+}_y S^{l,n}_{1,j}\mu^{l,n}_{m-1},\quad m=[1,j+1],\\
&-2\Delta t^{n+1}\frac{h^y_j}{h^x_{\frac{3}{2}}}\mathcal{E}^{-}_x S^{l,n}_{2,j}\mu^{l,n}_{m}-2\Delta t^{n+1}\frac{h^y_j}{h^x_{\frac{3}{2}}}  \mathcal{E}^{+}_x S^{l,n}_{1,j}\mu^{l,n}_{m-N_y},\quad m=[2,j].\\
\end{aligned}
\right.
\end{equation*}
Non-zero entries of the $k$th column ($k=[i,1]$) are
\begin{equation*}
\mathcal{K}^l_{m,k}=\left\{
\begin{aligned}
& -2\Delta t^{n+1} \frac{h^y_1}{h^x_{i-\frac{1}{2}}}\mathcal{E}^{-}_x S^{l,n}_{i,1}\mu^{l,n}_{m+N_y}-2\Delta t^{n+1}\frac{h^y_1}{h^x_{i-\frac{1}{2}}}\mathcal{E}^{+}_x S^{l,n}_{i-1,1}\mu^{l,n}_{m} ,\quad m=[i-1,1],\\
& 2\Delta t^{n+1} \left(\frac{h^y_1}{h^x_{i+\frac{1}{2}}}  \mathcal{E}^{+}_x S^{l,n}_{i,1}+  \frac{h^y_1}{h^x_{i-\frac{1}{2}}}\mathcal{E}^{-}_x S^{l,n}_{i,1}
 +\frac{h^x_i}{h^y_{\frac{3}{2}}}\mathcal{E}^{+}_y S^{l,n}_{i,1} \right)\mu^{l,n}_{m}+ 2\Delta t^{n+1}\frac{h^y_1}{h^x_{i-\frac{1}{2}}}\mathcal{E}^{+}_x S^{l,n}_{i-1,1}\mu^{l,n}_{m-N_y}\\
&\quad +2\Delta t^{n+1}\frac{h^y_1}{h^x_{i+\frac{1}{2}}}\mathcal{E}^{-}_x S^{l,n}_{i+1,1}\mu^{l,n}_{m+N_y}+2\Delta t^{n+1}\frac{h^x_i}{h^y_{\frac{3}{2}}}\mathcal{E}^{-}_y S^{l,n}_{i,2}\mu^{l,n}_{m+1}, ~m=k,\\
&-2\Delta t^{n+1}\frac{h^x_i}{h^y_{\frac{3}{2}}}\mathcal{E}^{-}_y S^{l,n}_{i,2}\mu^{l,n}_{m}-2\Delta t^{n+1} \frac{h^x_i}{h^y_{\frac{3}{2}}}\mathcal{E}^{+}_y S^{l,n}_{i,1}\mu^{l,n}_{m-1},\quad m=[i,2],\\
&-2\Delta t^{n+1}\frac{h^y_1}{h^x_{i+\frac{1}{2}}}\mathcal{E}^{-}_x S^{l,n}_{i+1,1}\mu^{l,n}_{m}-2\Delta t^{n+1}\frac{h^y_1}{h^x_{i+\frac{1}{2}}}  \mathcal{E}^{+}_x S^{l,n}_{i,1}\mu^{l,n}_{m-N_y},\quad m=[i+1,1].\\
\end{aligned}
\right.
\end{equation*}
Thus, we can verify that the conclusion \reff{Bpro} holds for the columns associated to edge grid points. For corner vertices grid points, e.g., $k=[1,1]$, we have
\begin{equation*}
\mathcal{K}^l_{m,k}=\left\{
\begin{aligned}
& 2\Delta t^{n+1} \left(\frac{h^y_1}{h^x_{\frac{3}{2}}}  \mathcal{E}^{+}_x S^{l,n}_{1,1}
 +\frac{h^x_1}{h^y_{\frac{3}{2}}}\mathcal{E}^{+}_y S^{l,n}_{1,1} \right)\mu^{l,n}_{m}+2\Delta t^{n+1}\frac{h^y_1}{h^x_{\frac{3}{2}}}\mathcal{E}^{-}_x S^{l,n}_{2,1}\mu^{l,n}_{m+N_y}\\
 & \quad +2\Delta t^{n+1}\frac{h^x_1}{h^y_{\frac{3}{2}}}\mathcal{E}^{-}_y S^{l,n}_{1,2}\mu^{l,n}_{m+1}, ~m=k,\\
&-2\Delta t^{n+1}\frac{h^x_1}{h^y_{\frac{3}{2}}}\mathcal{E}^{-}_y S^{l,n}_{1,2}\mu^{l,n}_{m}-2\Delta t^{n+1} \frac{h^x_1}{h^y_{\frac{3}{2}}}\mathcal{E}^{+}_y S^{l,n}_{1,1}\mu^{l,n}_{m-1},\quad m=[1,2],\\
&-2\Delta t^{n+1}\frac{h^y_1}{h^x_{\frac{3}{2}}}\mathcal{E}^{-}_x S^{l,n}_{2,1}\mu^{l,n}_{m}-2\Delta t^{n+1}\frac{h^y_1}{h^x_{\frac{3}{2}}}  \mathcal{E}^{+}_x S^{l,n}_{1,1}\mu^{l,n}_{m-N_y},\quad m=[2,1].\\
\end{aligned}
\right.
\end{equation*}
Other corner vertices can be verified analogously. Also, we can verify that $\mathcal{K}$ is a symmetric matrix. 

Without any assumption on $S^{l,n}_{i,j}$, we have
\begin{equation}\label{EBound}
0<\mathcal{E}^{\pm}_{k} S^{l,n}_{i,j}\leq \frac{1}{4} ~~\mbox{for}~ k=x, y.
\end{equation}
Therefore, we have the following results:
\[
\left\{
\begin{aligned}
&\sum_{m=1}^{N_xN_y} \mathcal{K}^l_{m,n} =\sum_{n=1}^{N_xN_y} \mathcal{K}^l_{m,n} =0 \quad \mbox{for}~ m,n=1, \dots, N_xN_y,\\
&0<\mathcal{K}^l_{n,n} <2\Delta t^{n+1}(\frac{h^y_M}{h^x_m}+\frac{h^x_M}{h^y_m})\|\pmb{\mu}^l\|_{\infty} \quad \mbox{for}~ n=1, \dots, N_xN_y,\\
&-\Delta t^{n+1}\max\{\frac{h^y_M}{h^x_m},\frac{h^x_M}{h^y_m}\}\|\pmb{\mu}^l\|_{\infty}<\mathcal{K}^l_{m,n} <0 \quad \mbox{for}~ n,m=1, \dots, N_xN_y,~ \mbox{and}~ m\neq n.
\end{aligned}
\right.
\]
\bibliographystyle{plain}
\bibliography{PNP}

\end{document}